\theoremstyle{plain}
\newtheorem{thm}{Theorem}[section]
\newtheorem{prop}[thm]{Proposition}
\theoremstyle{definition}
\newtheorem{definition}[thm]{Definition} 
\newtheorem{example}[thm]{Example}
\newtheorem{question}[thm]{Question}
\theoremstyle{remark}
\newtheorem{rem}[thm]{Remark}
\newcommand\rightmap[1]{\smash{\mathop{\rightarrow}\limits^{#1}}}
\newcommand{\CC}{\mathcal{C}}
\newcommand{\DD}{\mathcal{D}}
\newcommand{\EE}{\mathcal{E}}
\newcommand{\LL}{\mathcal{L}}
\newcommand{\TT}{\mathbb{T}}
\newcommand{\bb}{\mathbb{B}}
\newcommand{\bc}{\mathbb{C}}
\newcommand{\bn}{\mathbb{N}}
\newcommand{\bz}{\mathbb{Z}}
\newcommand{\bp}{\mathbb{P}}
\DeclareMathOperator{\lcm}{lcm}
\DeclareMathOperator{\rank}{rank}
\DeclareMathOperator{\Hom}{Hom}
\DeclareMathOperator{\GL}{GL}
\DeclareMathOperator{\ab}{ab}
\DeclareMathOperator{\Char}{\mathcal V}
\DeclareMathOperator{\orb}{orb}
\DeclareMathOperator{\NC}{NC}
\DeclareMathOperator{\LF}{LF}
\DeclareMathOperator{\nul}{null}
\DeclareMathOperator{\mult}{mult}
\DeclareMathOperator{\Sing}{Sing}
\DeclareMathOperator{\Reg}{Reg}
\DeclareMathOperator{\depth}{depth}
\numberwithin{equation}{section}
\newcommand\enet[1]{\renewcommand\theenumi{#1}
\renewcommand\labelenumi{\theenumi}}
\title{Orbifold groups, quasi-projectivity and covers}
\author[E. Artal]{Enrique Artal Bartolo}
\author[J.I. Cogolludo]{Jos\'e I. Cogolludo-Agust\'in}
\address{Departamento de Matem\'aticas\\
Universidad de Zaragoza\\
Campus Plaza San Francisco s/n\\
E-50009 Zaragoza SPAIN}
\email{artal@unizar.es,jicogo@unizar.es}
\author[D. Matei]{Daniel Matei}
\address{Institute of Mathematics of the Romanian Academy,
P.O. Box 1-764,  RO-014700, Bucharest, Romania} 
\email{Daniel.Matei@imar.ro}
\urladdr{http://www.imar.ro/\~{}dmatei}
\thanks{
Partially supported by
MTM2010-21740-C02-02. The third author is also partially supported
by grant CNCSIS PNII-IDEI 1188/2008 and FMI 53/10 (Gobierno de Arag{\'o}n).}
\begin{document}

\begin{abstract}
We discuss properties of complex algebraic orbifold groups, their characteristic varieties, and their 
abelian covers. In particular, we deal with the question of (quasi)-projectivity of orbifold groups.
We also prove a structure theorem for the variety of characters of normal-crossing quasi-projective 
orbifold groups. Finally, we extend Sakuma's formula for the first Betti number of abelian covers
of orbifold fundamental groups. Several examples are presented, including a compact orbifold
group which is not projective and a Zariski pair of plane curves in $\bp^2$ that can be told by 
considering an unbranched cover of $\bp^2$ with an orbifold structure.
\end{abstract}

\maketitle

\section*{Introduction}

Any finitely presented group $G$ is the fundamental group of a closed oriented~$4$-manifold. If we ask these
manifolds to have extra-properties, some restrictions may apply. For example, such a group is said to be 
\emph{K\"ahler} if it is the fundamental group of a compact K\"ahler manifold. 
%For example, such a group is said to
%be \emph{K{\"a}hler} if it is the fundamental of a closed oriented K\"ahler~$4$-manifold.
A subclass of this family is given by the \emph{projective} groups, i.e., the fundamental group of 
a complex projective smooth surface (or equivalently, of a projective $n$-manifold). 

The family of projective groups is a subfamily of \emph{quasi-projective} groups. Recall that a quasi-projective
manifold is the difference of two projective varieties. The study of K\"ahler, projective and quasi-projective groups
is closely related to orbifold groups, or more precisely to orbicurve groups, 
i.e. orbifold fundamental groups of complex $1$-dimensional orbifolds. Recently, orbifold groups
(in any complex dimension) have been considered (cf.~\cite{Uludag-orbifolds,Campana-special}
also~\cite{Kato-uniformization} for real orbifolds).

The first purpose of this paper is to define and study the properties of the different classes of complex orbifold 
fundamental groups such as compact, locally finite, and normal crossing. In particular, we prove that orbifold fundamental
groups are quasi-projective, but compact orbifold groups in general are not projective (see~\S\ref{sec-groups}).
In this context, we develop in \S\ref{sec-saturation} the concept of \emph{saturated} orbifolds, which will allow 
one to transform orbifolds without altering their fundamental group.

Our second purpose (see~\S\ref{sec-charvar}) is to extend two classical results regarding the variety of characters 
on smooth quasi-projective fundamental groups (due to Arapura~\cite{ara:97} and the authors~\cite{ACM-prep}) and
normal-crossing compact K\"ahler orbiface groups (due to Campana~\cite{Campana-special}) to the general case
of normal-crossing quasi-projective orbifold groups. 

Finally in \S\ref{sec-sakuma}, we extend Sakuma's formula (cf.~\cite{Sakuma-homology,Eriko-alexander}) to orbifold 
fundamental groups and their abelian covers in terms of their orbifold characteristic varieties. In order to do 
so, in \S\ref{sec-covers} we present the concepts of unbranched and branched coverings as well as the possible
uniformizations (Galois, regular, and virtually regular). Such formulas are illustrated with examples in dimensions
one and two.

\section{Orbifold groups}\label{sec-groups}

\begin{definition}\label{def-orbicurve}
Let $\bar{X}$ be a projective Riemann surface and let $\varphi:\bar{X}\to\bz_{\geq 0}$ 
be a function such that $S_\varphi:=\{p\in X\mid \varphi(p)\neq 1\}$ is finite. The pair $(\bar{X},\varphi)$ 
is said to be a $1$-dimensional \emph{orbifold} or an \emph{orbicurve}. The \emph{positive part} of the orbicurve
is $X_{\varphi}^+:=\bar{X}\setminus\varphi^{-1}(0)$ and we say that the orbifold is \emph{compact}
if $X_{\varphi}^+=\bar{X}$. The set $S_{\varphi}^{>1}:=X_{\varphi}^+\cap S_\varphi$ is called the \emph{singular part}
and $\varphi(p)$ is the \emph{orbifold index} of $p\in X_{\varphi}^+$.  
\end{definition}

The geometrical interpretation is the following. The source of 
the charts centered at $p\in X_{\varphi}^+$ are of the type~$\Delta/\mu_{\varphi(p)}$ where $\mu_n:=\{z\in\bc\mid z^n=1\}$, 
$\Delta$ is an open disk centered at~$0$ and $\mu_{\varphi(p)}$ acts on $\Delta$ by multiplication. 
This interpretation suggests the following definition. 

\begin{definition} Let $(\bar{X},\varphi)$ be an orbicurve. Let $X_\varphi:=\bar{X}\setminus S_\varphi$ and
$G:=\pi_1(X_\varphi; p_0)$ for some $p_0\in X_\varphi$. For each $p\in S_\varphi$ choose a meridian
$x_p\in G$ (its conjugacy class is well defined). Then, the \emph{orbifold fundamental group}
of $(X,\varphi)$ is defined as
$$
\pi_1^{\orb}(\bar{X},\varphi;p_0):=G/\langle x_p^{\varphi(p)}\rangle.
$$
A group is said to be an \emph{orbicurve group} if it is the orbifold fundamental group of an orbicurve.
\end{definition}

\begin{rem}
If the group can be described as the orbifold group of a compact orbicurve
then we will refer to it as a \emph{compact orbicurve group}.
\end{rem}

\begin{prop}\label{prop-orbicurve}
Any orbicurve group is quasi-projective.
\end{prop}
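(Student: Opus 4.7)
The plan is to produce a quasi-projective surface $V$ whose fundamental group is isomorphic to $\pi_1^{\orb}(\bar X,\varphi)$. The construction uses an elliptic fibration with multiple fibers as the device for installing the orbifold relations $x_p^{\varphi(p)}=1$: in an elliptic surface, a multiple fiber of multiplicity $m$ forces $\mu^{m}=1$ on the meridian $\mu$ of its image in the base, which is exactly the defining relation of the orbifold group.

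Write $S_\varphi^{>1}=\{p_1,\ldots,p_r\}$ with $m_j:=\varphi(p_j)\geq 2$ and $\varphi^{-1}(0)=\{q_1,\ldots,q_s\}$. First, I would construct a smooth projective elliptic fibration $f\colon Y\to\bar X$ whose only singular fibers are multiple fibers of multiplicity $m_j$ over each $p_j$; this is obtained by applying Kodaira logarithmic transformations of orders $m_j$ at the points $p_j$ to the trivial fibration $\bar X\times E\to\bar X$, with $E$ any elliptic curve. Then I would form the quasi-projective surface
\[
V := Y\setminus\Bigl(F_0\cup\Sigma\cup\bigcup_{i=1}^{s}f^{-1}(q_i)\Bigr),
\]
where $F_0=f^{-1}(p_0)$ is a general smooth fiber above an auxiliary point $p_0\notin\{p_j\}\cup\{q_i\}$ and $\Sigma$ is a section (or a multisection, after passing to a suitable cover) of $f$.

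For the identification $\pi_1(V)\cong\pi_1^{\orb}(\bar X,\varphi)$, note that over $B:=\bar X\setminus(\{p_0\}\cup\{p_j\}\cup\{q_i\})$ the restriction of $f$ is a locally trivial $E$-bundle, producing
\[
1\longrightarrow\pi_1(E)\longrightarrow\pi_1(f^{-1}(B))\longrightarrow\pi_1(B)\longrightarrow 1.
\]
Gluing back the multiple fiber at $p_j$ imposes a relation of the form $\tilde x_{p_j}^{m_j}=\gamma_j$ with $\gamma_j\in\pi_1(E)$ determined by the log transform, while the meridians around the removed fibers $F_0$ and $f^{-1}(q_i)$ remain unconstrained. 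Removing $\Sigma$ together with $F_0$ furnishes disks in $V$ that bound the two generators of $\pi_1(E)\cong\bz^2$, so the fiber subgroup dies in $\pi_1(V)$; in particular each $\gamma_j$ becomes trivial, and the surviving presentation matches that of $\pi_1^{\orb}(\bar X,\varphi)$, with the long relation $\prod_i[a_i,b_i]\prod_j x_{p_j}\prod_k x_{q_k}=1$ either being forced (in the compact case $s=0$) or rendered automatic by the absence of a base constraint (if $s\geq 1$).

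The main difficulty is the last point: controlling the image of $\pi_1(E)\cong\bz^2$ in $\pi_1(V)$ and guaranteeing that it vanishes. The existence of a section $\Sigma$ surviving the log-transform surgery is not automatic; if the trivial product $\bar X\times E$ does not provide one, I would pass to a finite \'etale cover of $Y$ where a section does exist, or blow up $Y$ along an auxiliary multisection to produce exceptional divisors whose complements in $V$ furnish the required bounding disks. Once the fiber classes are killed, matching the resulting presentation with the orbifold presentation is straightforward.
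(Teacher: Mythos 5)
Your overall strategy---realize each orbifold relation $x_p^{\varphi(p)}=1$ by a geometric device over the point $p$ and then pass to a quasi-projective open subset---is the same as the paper's, but the device you chose (logarithmic transforms on an elliptic fibration) creates a problem that your argument does not solve, and the step you propose to solve it is backwards. For a smooth variety $Y$ and a divisor $D\subset Y$, the inclusion $Y\setminus D\hookrightarrow Y$ induces a \emph{surjection} on fundamental groups, so deleting divisors can only enlarge $\pi_1$; it never kills loops. The claim that ``removing $\Sigma$ together with $F_0$ furnishes disks in $V$ that bound the two generators of $\pi_1(E)$, so the fiber subgroup dies'' is therefore false: concretely, $\bar{X}\times(E\setminus\{e\})$ has fundamental group $\pi_1(\bar{X})\times F_2$, so after removing a (multi)section the fiber classes not only survive but become free. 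Since your fibration is trivial away from the $p_j$, the monodromy is trivial and nothing forces the image of $\pi_1(E)\cong\bz^2$ to vanish in $\pi_1(V)$; hence $\pi_1(V)$ contains a normal subgroup coming from the fiber with no counterpart in $\pi_1^{\orb}(\bar{X},\varphi)$. (This is precisely why, in the compact case, Friedman--Morgan need genuinely singular fibers---not just multiple ones---to identify the fundamental group of an elliptic surface with the base orbifold group.) A secondary but real obstruction: once a multiple fiber of multiplicity $m_j\geq 2$ exists, a section is impossible, since $\Sigma\cdot F_j$ would have to equal $1/m_j$; so ``pass to a cover where a section exists'' is not available without undoing the log transforms.

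The paper sidesteps all of this by fibering with simply connected fibers: it takes $Y=X_\varphi^+\times\bp^1$ and, over each $p$ with $\varphi(p)=n$, performs an $n$-fold iterated blow-up along $\{p\}\times\bp^1$. A meridian of the $j$-th exceptional component is $x_p^j$, so retaining only the last exceptional component in the open surface (while deleting $\{p\}\times\bp^1$ and the intermediate exceptional curves) imposes exactly the relation $x_p^{\varphi(p)}=1$ and nothing else. The operative mechanism is dual to the one you invoke: a loop is killed by arranging for it to be a meridian of a divisor that is \emph{kept} in the quasi-projective surface, not by deleting divisors. If you wish to keep the elliptic picture you would need to kill the fiber group by some such retention argument (e.g.\ introducing honestly singular fibers), at which point you are essentially rebuilding the blow-up construction with extra complications.
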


In order to prove this result we introduce the following concept.

\begin{definition}
Let $X$ be a smooth quasi-projective surface, let $\DD$ be a divisor in $X$ and let $D\subset X$ an irreducible 
component of $\DD$. 
An \emph{$n$-fold blow-up} $\rho$ of $(X,\DD)$ on $D$ is a composition of blowing-ups 
$\rho_j:X_j\to X_{j-1}$, $1\leq j\leq n$, such that $X_0:=X$, the center of $\rho_1$ is a smooth point of $\DD$ 
in $D$ and if $E_j$ is the exceptional component of $\rho_j$ then, for $j>1$ the center of $\rho_j$ is the 
intersection of $E_j$ with the strict transform of $D$. The component $E_j$ is called the 
\emph{$j$-th exceptional component} of~$\rho$.
\end{definition}

\begin{rem}
\label{rem-nfold}
The most useful property of an $n$-fold blow-up $\rho$ on an irreducible divisor $D$ is that, if $\mu$ is a 
meridian around $D$, then $\mu^j$ is a meridian around the $j$-th exceptional component of~$\rho$.
\end{rem}

\begin{proof}[Proof of Proposition{\rm~\ref{prop-orbicurve}}]
Let $(\bar{X},\varphi)$ be an orbicurve. Let $Y:=X_\varphi^+\times\bp^1$ be a surface, 
and let $\DD:=S_{\varphi}^{>1} \times\bp^1\subset Y$. For each $p\in S_{\varphi}^{>1}$ consider 
a $\varphi(p)$-fold blow-up $\rho_p:\tilde{Y}\to Y$ on the divisor $F_p:=\{p\}\times\bp^1$.
Let $E_j^p$ be the $j$-th exceptional component of $\rho_p$. 
Let $x_p$~be a meridian of $\{p\}\times\bp^1$ in $\pi_1(\tilde{Y})$. Following the previous remark,
$x_p^j$~is a meridian of $E_j^p$ in $\pi_1(\tilde{Y})$. Set
$Z:=\tilde{Y}\setminus\bigcup_{p\in S^{>1}_\varphi}\left(F_p\cup\bigcup_{j=1}^{\varphi(p)-1}E_j^p \right)$.
The kernel of the epimorphism $\pi_1(X_\varphi^+)\cong\pi_1(\tilde{Y})\twoheadrightarrow\pi_1(Z)$
is normally generated by the meridians $x_p^{\varphi(p)}$ of~$E_{\varphi(p)}^p$.
Then $Z$ is a smooth quasi-projective surface and $\pi_1(Z)$ is isomorphic to~$\pi_1^{\orb}(\bar{X},\varphi)$.
\end{proof}

\begin{rem}\label{rem-fm}
As shown in~\cite[Theorem~II.2.3]{MF}, compact orbicurve groups are projective groups.
\end{rem}

We will define orbifolds and orbifold groups following Campana
(cf.~\cite{Campana-special} and bibliography therein).
Since we are mostly interested in quasi-projective groups, after using Zariski-Lefschetz theory we can
restrict our attention to the curve and surface case. However, since we will deal with orbifold covers
(see~\S\ref{sec-covers}) orbifolds with abelian quotient singularities will also be allowed.

\begin{definition}
Let $\bar{X}$ be a projective variety (a normal variety with only abelian quotient singularities)
and let $\DD=\bigcup_{j=1}^r D_i$ be the decomposition of a hypersurface in irreducible components. 
Let us consider a function
$\varphi:\{D_1,\dots,D_r\}\to \bz_{\geq 0}$, $n_i:=\varphi(D_i)$. 
An \emph{orbifold} is simply a pair $(\bar{X},\varphi)$. The \emph{positive part} of the orbifold
is defined as $X_\varphi^+:=\bar{X}\setminus\varphi^{-1}(0)$. 
The orbifold is said to be \emph{compact} if $\bar{X}=X_\varphi^+$. 
The orbifold will be a \emph{normal-crossing orbifold} ($\NC$ for short) if $\DD$ 
is a normal crossing divisor with smooth components.
\end{definition}

\begin{rem}
Note that, for technical reasons, the components of $\DD$ are allowed to have index one 
(that is, $n_j=1$). However, this plays no important role in the definition of an orbifold.
Hence, if no ambiguity seems likely to arise, we denote by the same symbols an orbifold and
its analogous where $\varphi^{-1}(1)$ is disregarded.
What is really important in the definition is the quasi-projective variety $X_\varphi^+$ and 
the components $D_j$ with $n_j>1$. Following the definitions for the orbicurve case we also define
$$
S_\varphi:=\{D_j\mid n_j\neq 1\},\ 
S_\varphi^{>1}:=\{D_j\mid n_j>1\},\ 
X_\varphi:=\bar{X}\setminus\left(\bigcup S_\varphi\right),\ 
\mathring{X}_\varphi:=\bar{X}\setminus \DD.
$$
Note that $\mathring{X}_\varphi\subset X_\varphi\subset X_\varphi^+$. 
In $\pi_1(X_\varphi)$ and $\pi_1(\mathring{X}_\varphi)$ one has special conjugacy classes: for each $D_i$ we 
consider the meridians of $D_i$ in either $\pi_1(X_\varphi)$ or $\pi_1(\mathring{X}_\varphi)$. Note that the 
kernel of the epimorphism $\pi_1(X_\varphi)\twoheadrightarrow\pi_1(\bar{X})$ is the subgroup generated by the meridians 
of $D_j$, $n_j\neq 1$ whereas the kernel of the epimorphism 
$\pi_1(\mathring{X}_\varphi)\twoheadrightarrow\pi_1(\bar{X})$ is the subgroup generated by the meridians 
of~$D_1,\dots,D_r$. 
\end{rem}

\begin{definition}
Under the notation above, given an orbifold $(\bar{X},\varphi)$ we define its \emph{orbifold fundamental group}
as the group $\pi_1^{\orb}(\bar{X},\varphi;p_0)$, 
$p_0\in \Reg(\mathring{X}_\varphi):=\mathring{X}_\varphi\setminus \Sing(\mathring{X}_\varphi)$ 
obtained as the quotient of $\pi_1(\mathring{X}_\varphi;p_0)$ by the subgroup normally generated by 
$\{\mu_j^{n_j}\}_{1\leq j\leq r},$ where $\mu_j$ is a meridian of~$D_j$. Note that $\pi_1(\mathring{X}_\varphi)$ 
can also be replaced by $\pi_1(X_\varphi)$ in this definition.

For $p\in X_\varphi^+$ one can define the \emph{local orbifold fundamental group} $\pi_1^{\orb}(\bar{X},\varphi)_p$ 
as the quotient of $\pi_1(\Reg(\mathring{X}_\varphi))_p$ by the subgroup 
normally generated by the appropriate powers $\mu_j^{n_j}$ of the meridians $\mu_j$ of $\DD$ in a small ball around~$p$. 
The orbifold $(\bar{X},\varphi)$ shall be called \emph{locally finite at $p$} if $\pi_1^{\orb}(\bar{X},\varphi)_p$ is 
a finite group, and \emph{locally finite} (or simply $\LF$) if it is locally finite at $p$, $\forall p\in X_\varphi^+$.
\end{definition}

We need to extend the notion of the orbifold
index of a point in an orbifold as we did for orbicurves in Definition~\ref{def-orbicurve}.

\begin{definition}
\label{def-nu}
Let $(\bar{X},\varphi)$ be an $\NC$-orbifold and let $p\in\bar{X}$. We define the \emph{orbifold index} 
$\nu(p)=\nu_{(\bar{X},\varphi)}(p)$ of $p$ as follows:
$$
\nu(p):=
\begin{cases}
\iota_p &\text{ if } p\in\bar{X}\setminus\DD,\\
n_j \cdot \iota_p &\text{ if } p\in D_j\setminus\bigcup_{i\neq j}D_i,\\
n_i\cdot n_j \cdot \iota_p &\text{ if } p\in D_i\cap D_j, i\neq j,
\end{cases}
$$
where 
$\iota_p=|A|$ if $(\bar{X},p)\cong(\bc^2/A,0)$, the quotient by the linear
action of  a small abelian subgroup~$A\subset\GL(2;\bc)$
(note that $\iota_p=1$ iff $p\in \Reg(\bar{X})$).
\end{definition}

\begin{rem}
If $p\in \Reg(X_\varphi)$ (or $\Reg(\mathring{X}_\varphi)$) then  $\pi_1^{\orb}(X,\varphi)_p$ is a trivial group. 
\end{rem}

\begin{prop}
If $p\in X_\varphi^+$ then $\nu(p)=\#\pi_1^{\orb}(X,\varphi)_p$.
\end{prop}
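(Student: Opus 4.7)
The plan is to compute $\#\pi_1^{\orb}(\bar X,\varphi)_p$ directly from its definition, splitting into the three cases of Definition~\ref{def-nu} and, within each, treating separately the situation where $p$ is a smooth point of $\bar X$ (so $\iota_p=1$) and where $p$ is a quotient singularity (so $\iota_p=|A|>1$).

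First I would dispose of the smooth case $\iota_p=1$. A small neighborhood of $p$ is biholomorphic to a ball $B\subset\bc^2$, and by the $\NC$ hypothesis $B\cap\DD$ consists of $0$, $1$, or $2$ smooth analytic disks meeting transversally at $p$. The complement $B\setminus\DD$ is then homotopy equivalent to a point, to $S^1$, or to $S^1\times S^1$, with fundamental group $1$, $\bz\langle\mu_j\rangle$, or $\bz\langle\mu_i\rangle\oplus\bz\langle\mu_j\rangle$ respectively. Dividing by the normal closure of $\{\mu_k^{n_k}\}$ yields groups of orders $1$, $n_j$, and $n_in_j$, in agreement with $\nu(p)$.

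For the singular case, write $(\bar X,p)\cong(\bc^2/A,0)$ with $A\subset\GL(2;\bc)$ a small finite abelian group. Smallness together with finiteness forces $A$ to act freely on $\bc^2\setminus\{0\}$: any non-identity $g\in A$ fixing a non-zero vector is diagonalizable of finite order with $1$ as an eigenvalue of multiplicity exactly one (else $g=I$), producing a fixed complex line and contradicting smallness. Take a small $A$-invariant ball $\tilde B\subset\bc^2$ and the quotient $\pi\colon\tilde B\to B=\tilde B/A$. If $p\notin\DD$, then $\Reg(\mathring X_\varphi)\cap B=B\setminus\{p\}=(\tilde B\setminus\{0\})/A$ and, since $\tilde B\setminus\{0\}$ is simply connected, $\pi_1^{\orb}(\bar X,\varphi)_p\cong A$ has order $\iota_p=\nu(p)$. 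If instead $p\in\DD$, set $\tilde\DD:=\pi^{-1}(\DD\cap B)$, which is $A$-invariant normal crossing with $1$ or $2$ smooth irreducible components through $0$; since $A$ acts freely on $\tilde B\setminus\{0\}\supset\tilde B\setminus\tilde\DD$, the map $\pi\colon\tilde B\setminus\tilde\DD\to B\setminus\DD$ is an unramified regular $A$-cover, yielding
\begin{equation*}
1\longrightarrow\pi_1(\tilde B\setminus\tilde\DD)\longrightarrow\pi_1(B\setminus\DD)\longrightarrow A\longrightarrow 1.
\end{equation*}
I would then argue that imposing the orbifold relations $\mu_k^{n_k}$ on the middle group amounts to imposing $\tilde\mu_k^{n_k}$ on the kernel, which descends the sequence to
\begin{equation*}
1\longrightarrow\pi_1(\tilde B\setminus\tilde\DD)/\langle\tilde\mu_k^{n_k}\rangle\longrightarrow\pi_1^{\orb}(\bar X,\varphi)_p\longrightarrow A\longrightarrow 1.
\end{equation*}
By the smooth case applied inside $\tilde B$, the kernel has order $n_j$ or $n_in_j$, and multiplying by $|A|=\iota_p$ gives $\nu(p)$.

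The main obstacle is the descent between the two exact sequences, which requires two checks. First, each downstairs meridian $\mu_k$ of $D_k$ must be identified with an upstairs meridian $\tilde\mu_k$ of $\tilde D_k$; this follows by picking a generic point of $\tilde D_k$, where $A$ acts freely and $\pi$ is a local homeomorphism, so a small loop around $\tilde D_k$ projects to a small loop around $D_k$. Second, the normal closure of $\{\mu_k^{n_k}\}$ in $\pi_1(B\setminus\DD)$ must coincide with $\langle\tilde\mu_k^{n_k}\rangle$ inside the abelian kernel $\pi_1(\tilde B\setminus\tilde\DD)\cong\bz^a$; for this one notes that conjugation of $\pi_1(B\setminus\DD)$ on this abelian normal subgroup factors through the deck action of $A$, and that the latter fixes each generator $\tilde\mu_k$ because $A$ preserves $\tilde D_k$ setwise and acts by orientation-preserving homeomorphisms, so no further conjugates are introduced.
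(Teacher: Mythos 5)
Your proof is correct and follows essentially the same route as the paper's: both reduce to the local model $(\bc^2,\tilde\DD)/A$ and exploit the exact sequence $1\to\pi_1(\tilde B\setminus\tilde\DD)\to\pi_1(B\setminus\DD)\to A\to 1$ together with the identification of downstairs meridians with upstairs ones. The only difference is presentational: the paper computes with explicit generators (e.g.\ $\mu_i=t^{\iota_p}$), whereas you justify the descent more structurally by observing that the conjugation action of $A$ on the abelian kernel fixes the meridian classes --- a point the paper leaves implicit.
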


\begin{proof}
We distinguish several cases for $p$ such that
$(\bar{X},p)\cong(\bc^2/A,0)$ where $A$ is a small abelian group (hence cyclic).
Let $\bb_p$ be a small
neighborhood of $p$ (a quotient of a ball $\bb_0$ in $\bc^2$).

Let us suppose that $p\in \mathring{X}_\varphi$.
In this case $\pi_1^{\orb}(\bar{X},\varphi)_p$ is isomorphic to
$\pi_1(K_p)$, where $K_p$ is the link of the singularity $(\bar{X},p)$
which is a lens space with fundamental group~$A$ and the result follows.

Let us assume now that $p$ belongs only to one irreducible component $D_i\subset\DD$ 
where $D_i$ is the image of $Y:=\{y=0\}\subset\bc^2$.  We have a short exact sequence
$$
0\to\pi_1(\bb_0\setminus Y)\to\pi_1(\bb_p\setminus D_i)\to A\to 0.
$$
Both $\pi_1(\bb_0\setminus Y)$ and $\pi_1(\bb_p\setminus D_i)$ are isomorphic to $\bz$
(written with multiplicative notation), which is generated by an element~$t$ which projects
to a generator of $A$. By the definition of the action, the image of a generator of $\pi_1(\bb_0\setminus Y)$
is a meridian~$\mu_i$ of $D_i$ which equals $t^{\iota_p}$. Hence, we obtain $\pi_1^{\orb}(X,\varphi)_p$
from $\pi_1(\bb_p\setminus D_i)$ by killing $x_i^{n_i}=t^{i_p n_i}=t^{\nu(p)}$ and the result follows.

Finally, let us assume that $p$ belongs to two irreducible components $D_i, D_j\subset\DD$ 
where $D_i$ is the image of $Y:=\{y=0\}\subset\bc^2$ and 
$D_j$ is the image of $X:=\{x=0\}$. The covering induces the following short exact sequence:
\begin{equation}
\label{eq-ses}
0\to\pi_1(\bb_0\setminus(X\cup Y))\to\pi_1(\bb_p\setminus (D_i\cup D_j))\to A\to 0.
\end{equation}
Both $\pi_1(\bb_0\setminus Y)$ and $\pi_1(\bb_p\setminus D_i)$ are isomorphic to $\bz^2$
(written with multiplicative notation as above). The group $\pi_1(\bb_0\setminus(X\cup Y))$
is generated by commuting meridians of $X$ and $Y$ whose images are $x_i$ and $x_j$.
We can choose an element~$t\in\pi_1(\bb_p\setminus (D_i\cup D_j))$ which projects
to a generator of $A$. With a suitable choice of~$t$, we have $t^{\iota_p}=x_i x_j^k$
($k$ depends on the specific action and is coprime with $\iota_p$).
Hence~\eqref{eq-ses} induces the following short exact sequence
$$
0\to\langle x,y\mid [x,y]=1,x^{n_j}=x^{n_i}=1\rangle\to\pi_1^{\orb}(X,\varphi)_p\to A\to 0
$$
and the result follows.
\end{proof}

\begin{rem}
Note that if $p$ is an orbifold point of index~$m$, then $\pi_1^{\orb}(\bar{X},\varphi)_p$ is cyclic of order~$m$. 
If $p$ is an ordinary double point of $\DD$ belonging to two components $D_i,D_j$ with $n_i,n_j>1$, then 
$\pi_1^{\orb}(X,\varphi)_p$ is the product of two finite cyclic groups. As a consequence, if $(\bar{X},\varphi)$ 
is a normal crossing orbifold then it is in particular a locally finite orbifold.
\end{rem}

\begin{definition}
A group $G$ is said to be an \emph{orbifold group} if it is isomorphic to 
$\pi_1^{\orb}(\bar{X},\varphi;p_0)$ for some orbifold $(\bar{X},\varphi)$. If one can choose
$(\bar{X},\varphi)$ to be such that $n_i>0$, $\forall i$, then we say that $G$ is a \emph{compact orbifold group}. 
If, moreover $(\bar{X},\varphi)$ is a locally finite (resp. normal crossing orbifold), we say that $G$ is an 
$\LF$ (resp. $\NC$) \emph{compact orbifold group}.
\end{definition}

\begin{rem}
Note that an orbifold group as defined below is also the fundamental group of an orbifold
$(\bar{X},\varphi)$ where $\bar{X}$ is smooth. 
\end{rem}

\begin{rem}\label{rem-blowup1}
We do not define the more general concept of \emph{$\LF$ or $\NC$ orbifold groups} since they coincide immediately
with the concept of \emph{orbifold group} by the following fact. If we blow up a point in $p\in\DD$, we obtain 
a new surface $\bar{Y}$ and a new divisor $\hat{\DD}$ with $r+1$ irreducible components (the strict transforms 
of the components $D_i$, with the same notation, and the exceptional component $D_{r+1}$). We can define a map 
$\hat{\varphi}$ such that $\hat{\varphi}(D_i)=n_i$, $1\leq i\leq r$, and $\hat{\varphi}(D_{r+1})=0$ and the 
orbifold fundamental group does not change. An iterated application of this procedure will give us a normal 
crossing divisor. 
\end{rem}

\begin{prop}\label{prop-7arr}
Let us consider in $\bp^2$ the arrangement of lines $\LL$ given by the equation $x y z (x^2 - z^2) (y^2-z^2)$
and consider the orbifold structure $\varphi_\LL$ given by assigning~$2$ to each line in $\LL$. Let 
$G:=\pi_1^{\orb}(\bp^2,\varphi_\LL)$. The meridians in $G$ of the exceptional components of the 
blowing-ups of the quadruple points of $\LL$ are of infinite order.
\end{prop}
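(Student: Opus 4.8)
The plan is to compute $\pi_1(\bp^2\setminus\LL)$ explicitly by a Cremona transformation, read off the meridians of the exceptional divisors, and then reduce the statement to the infinite order of a single element, which I would detect on a finite cover via a nilpotent representation.

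First I would record the combinatorics: the seven lines meet in exactly the two quadruple points $P_1=[0:1:0]$ (on $x=0,\ x=z,\ x=-z,\ z=0$) and $P_2=[1:0:0]$, together with nine nodes (indeed $\binom{7}{2}=2\cdot 6+9$). The map $\Psi\colon\bp^2\dashrightarrow\bp^1\times\bp^1$, $[x:y:z]\mapsto([x:z],[y:z])$, has $P_1,P_2$ as its only indeterminacy points; blowing them up gives exceptional divisors $E_1,E_2$ and a morphism contracting the strict transform of $L_3=\{z=0\}$ to $(\infty,\infty)$, with $E_1\mapsto\bp^1\times\{\infty\}$ and $E_2\mapsto\{\infty\}\times\bp^1$. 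This yields a biholomorphism $\bp^2\setminus\LL\cong B\times B$ with $B:=\bp^1\setminus\{0,1,-1,\infty\}$, so $\pi_1(\bp^2\setminus\LL)\cong F_3\times F_3$ (abelianizing to $\bz^6$, as required for seven lines). Writing $\pi_1(B)=\langle a,b,c,d\mid abcd=1\rangle$ for the meridians around $0,1,-1,\infty$, the seven meridians become $a_1,b_1,c_1$ and $a_2,b_2,c_2$ together with $\mu_{L_3}=d_1d_2$; and, since the meridian of the blow-up of $P_i$ is the product of the four meridians through $P_i$, the contraction identifies $\mu_{E_1}\sim d_2^{\pm1}$ and $\mu_{E_2}\sim d_1^{\pm1}$ (the meridians around the two points at infinity).

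Imposing $\mu_{L_i}^2=1$ then gives
$$
G=\pi_1^{\orb}(\bp^2,\varphi_\LL)\ \cong\ (Q_1\times Q_2)\big/\big\langle\!\big\langle (d_1d_2)^2\big\rangle\!\big\rangle,\qquad Q_i=\bz/2*\bz/2*\bz/2,
$$
where $d_i$ is the (infinite-order) product of the three involutions of $Q_i$. As $d_1,d_2$ commute, the relation is $d_1^2d_2^2=1$, and the proposition reduces to showing that $d_2$ (equivalently $d_1$, by $x\leftrightarrow y$) has infinite order in $G$. To see this I would pass to the index-$4$ subgroup $G_0=\ker(G\twoheadrightarrow\bz/2\times\bz/2)$, the kernel of the two sign characters. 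The sign double cover of each factor $\bc(2,2,2)$ is a once-punctured torus (the three order-$2$ cone points are resolved, the puncture lies over $\infty$; Euler characteristic $2\cdot(-\tfrac12)=-1$), so $(Q_i)_0=\langle\lambda_i,\mu_i\rangle\cong F_2$ with $d_i^2=[\lambda_i,\mu_i]$, and by Reidemeister–Schreier
$$
G_0\ \cong\ (F_2\times F_2)\big/\big\langle\!\big\langle [\lambda_1,\mu_1][\lambda_2,\mu_2]\big\rangle\!\big\rangle .
$$
Since $d_2^2\in G_0$, it now suffices to prove that the commutator $[\lambda_2,\mu_2]=d_2^2$ has infinite order here.

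The main obstacle is precisely the constraint $[Q_1,Q_2]=1$: any homomorphism out of $G$ sends the two factors to commuting subgroups, so the naive projections kill one of $d_1^2,d_2^2$ and force the other to vanish — which is why $G^{ab}$, and in fact any abelian or Euclidean representation, cannot detect the infinite order. The remedy is a $2$-step nilpotent quotient in which $d_1^2$ and $d_2^2$ become inverse central elements. Explicitly, let $H$ be the $2$-step nilpotent group on $X_1,Y_1,X_2,Y_2$ with $[X_1,Y_1]=Z$, $[X_2,Y_2]=Z^{-1}$, $Z$ central of infinite order, and all cross-commutators trivial (the central product of two integral Heisenberg groups). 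Then $\lambda_i\mapsto X_i,\ \mu_i\mapsto Y_i$ defines a homomorphism $F_2\times F_2\to H$ respecting the product structure, which sends $[\lambda_1,\mu_1][\lambda_2,\mu_2]\mapsto ZZ^{-1}=1$ and hence descends to $G_0$; as $[\lambda_2,\mu_2]\mapsto Z^{-1}$, the element $d_2^2$ has infinite order in $G_0$, so $d_2$ has infinite order in $G$. This fits the geometry: a quadruple point of index-$2$ lines has the Euclidean orbifold $\bp^1(2,2,2,2)$ as the base of its exceptional fibration and is therefore not locally finite, the meridian $\mu_{E_i}$ playing the role of the infinite-order central fibre.
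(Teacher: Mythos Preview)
Your argument is correct and follows the same overall strategy as the paper's: exploit the product structure of $\pi_1(\bp^2\setminus\LL)$ (whence the relations $[x_i,y_j]=1$), pass to a finite-index subgroup, and detect the infinite order of the exceptional meridian through a $2$-step nilpotent quotient. The differences are in the packaging. The paper simply asserts the presentation of $G$ and then computes the full commutator subgroup $G'$ (index~$64$) by Reidemeister--Schreier, obtaining
\[
G'=\langle a_i,b_j,c\mid [a_i,b_j]=1,\ [a_1,a_2]=[b_1,b_2]=c^{4},\ c\text{ central}\rangle_{i,j=1,2},
\]
from which the infinite order of $c=X^{2}$ is read off directly. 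You instead derive the product structure geometrically via the Cremona map $\bp^2\dashrightarrow\bp^1\times\bp^1$, pass to the much smaller index-$4$ subgroup $G_0$ (kernel of the two sign characters), recognise it as $(F_2\times F_2)/\langle\!\langle[\lambda_1,\mu_1][\lambda_2,\mu_2]\rangle\!\rangle$ via the once-punctured-torus double covers of $\bc_{2,2,2}$, and exhibit the Heisenberg-type quotient $H$ explicitly. Your route is more self-contained (the Cremona explains \emph{why} the $x_i$ commute with the $y_j$, and the nilpotent witness is written down rather than left as ``straightforward''); the paper's is terser but leans on a heavier Reidemeister--Schreier step. One small point worth making explicit in your write-up: the identification of $G_0$ requires that the $(\bz/2)^2$-conjugates of $d_1^{2}d_2^{2}$ already lie in the $F_2\times F_2$-normal closure of $d_1^{2}d_2^{2}$; this holds because the hyperelliptic deck involution on each once-punctured torus sends the boundary loop to a conjugate of itself, so each outer conjugate of the relator is an inner conjugate.
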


\begin{proof}
It is easy to see that 
$$
G:=
\langle 
x_i,y_j,\gamma_z : x_i^2=y_j^2=\gamma_z^2=[x_i,y_j]=1, \gamma_z=(XY)^{-1} 
\rangle_{i,j=1,2,3},
$$ 
where $X=x_1x_2x_3$, $Y=y_1y_2y_3$ and $x_i$ (resp. $y_j$) $i,j=1,2,3$ are meridians around the vertical
(resp. horizontal) lines and $\gamma_z$ is a meridian around the line at infinity $\{z=0\}$. 
Denote by $\gamma_{E_x}$ (resp. $\gamma_{E_y}$) the meridian in $G$ 
around the exceptional divisor $E_x$ (resp. $E_y$) after blowing up the point $[0:1:0]$ (resp. $[1:0:0]$).
Note that $\gamma_{E_x}=\gamma_zX=Y^{-1}$, $\gamma_{E_y}=\gamma_zY=X^{-1}$. 
By symmetry, it is enough to 
show that $X$ has infinite order in $G$ or equivalently $c:=X^2\in G'$ has infinite order.
Using Reidemeister-Schreier method it is easily seen that
\begin{equation}\label{eq-pres-g'}
 G'=
\langle
a_i,b_j,c\mid\ [a_i,b_j]=1,[a_1,a_2]=[b_1,b_2]=c^4,\ c\text{ central}
\rangle_{i,j=1,2}.
\end{equation}
It is straightforward that $c$ has infinite order.
\end{proof}

Using the same ideas as in Proposition~\ref{prop-orbicurve} we obtain the following result.

\begin{prop}\label{prop-orb-qp}
Any orbifold group is a quasi-projective group.
\end{prop}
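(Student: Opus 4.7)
The plan is to mimic the construction of Proposition~\ref{prop-orbicurve}, with the advantage that the divisors on which to perform the blow-up towers now live already on $\bar{X}$, so no auxiliary product with $\bp^1$ is needed.

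Using the remark that any orbifold group is realised by an orbifold with $\bar{X}$ smooth, together with the $\NC$-reduction of Remark~\ref{rem-blowup1}, we may first assume that $(\bar{X},\varphi)$ is normal crossing with $\bar{X}$ a smooth projective variety. Next, applying a quasi-projective Zariski-Lefschetz theorem to a generic complete intersection surface $S\subset\bar{X}$, we may further assume $\dim\bar{X}=2$: the inclusion $S\cap\mathring{X}_\varphi\hookrightarrow\mathring{X}_\varphi$ induces an isomorphism on $\pi_1$ under which meridians of components of $\DD\cap S$ correspond to meridians of components of $\DD$, so $\pi_1^{\orb}(\bar{X},\varphi)\cong\pi_1^{\orb}(S,\varphi|_S)$.

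Assume now that $\bar{X}$ is a smooth projective surface. For each $i$ with $n_i>1$, choose pairwise distinct smooth points $p_i\in D_i\setminus\bigcup_{j\neq i}D_j$ and perform an $n_i$-fold blow-up on $D_i$ centered at $p_i$ in the sense of the definition preceding Remark~\ref{rem-nfold}. Let $\tilde{X}$ be the resulting surface and $E_1^i,\dots,E_{n_i}^i$ the exceptional components of this tower. By Remark~\ref{rem-nfold}, $x_i^j$ is a meridian of $E_j^i$, where $x_i$ denotes a meridian of $D_i$. Set
\[
Z:=\tilde{X}\setminus\left(\bigcup_{n_i\neq 1}D_i\;\cup\;\bigcup_{n_i>1}\bigcup_{j=1}^{n_i-1}E_j^i\right),
\]
a smooth quasi-projective surface. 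The complement in $\tilde{X}$ of all $D_i$ together with all exceptionals is biholomorphic via the blow-down to $\mathring{X}_\varphi$, hence has fundamental group $\pi_1(\mathring{X}_\varphi)$; passing from this complement to $Z$ adds back exactly the last exceptional $E_{n_i}^i$ of each tower, which kills precisely the meridians $x_i^{n_i}$. Therefore $\pi_1(Z)$ is the quotient of $\pi_1(\mathring{X}_\varphi)$ by the normal subgroup generated by the $x_i^{n_i}$, which is isomorphic to $\pi_1^{\orb}(\bar{X},\varphi)$.

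The most delicate step is the Lefschetz reduction to dimension two, where one needs not merely an isomorphism of fundamental groups but also the compatibility between meridians on $\bar{X}$ and on $S$ (since these are what get quotiented out in the orbifold group). Once that standard compatibility is granted, the rest is a direct transcription of the orbicurve argument, with the auxiliary vertical fibres $F_p$ now replaced by the components $D_i$ themselves.
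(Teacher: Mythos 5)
Your proof is correct and follows essentially the same route as the paper: for each $D_i$ with $n_i>1$ you perform an $n_i$-fold blow-up on $D_i$ and remove the strict transforms together with all but the last exceptional components, so that Remark~\ref{rem-nfold} identifies $\pi_1$ of the resulting quasi-projective surface with $\pi_1^{\orb}(\bar{X},\varphi)$. The only difference is that you spell out the preliminary reductions (smooth model, normal crossings, Lefschetz descent to a surface) that the paper leaves implicit in its earlier remarks.
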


\begin{proof}
As above, consider $(\bar{X},\varphi)$ an orbifold for which $\DD=D_1\cup \dots \cup D_r$.
For each divisor $D_j\in S_\varphi^{>1}$ let $\rho_j$ be the $n_j$-fold blow-up on $D_j$ and denote by 
$\rho:\bar{Y}\to \bar{X}$ the composition of all of them. Let us denote by $E_{k,j}$, $1\leq k\leq n_j$, 
$1\leq j\leq r$ the $k$-th exceptional component of~$\rho_j$. Define
$Y:=\bar{Y}\setminus\bigcup_{D_j\in S_\varphi^{>1}}\left( D_j\cup \bigcup_{k=1}^{n_j-1} E_{k,j}\right)$,
where $D_j$ here denotes the strict transform of $D_j$ by $\rho$ and similarly with $E_{k,j}$. 
Note that $\bar{Y}$ is the result of a finite process of blow-ups of a projective variety $\bar{X}$, 
hence $Y$ is quasi-projective variety. Moreover, using Remark~\ref{rem-nfold} it is straightforward to 
check that $Y$ satisfies the required property~$\pi_1(Y)\cong \pi_1^{\orb}(\bar{X},\varphi)$.
\end{proof}

In light of Remark~\ref{rem-fm} and Proposition~\ref{prop-orb-qp}, the following question arises:

\begin{question}
Is any compact orbifold group (or $\NC$-compact orbifold group) a projective group?
\end{question}

A negative answer to the first part is provided by the ideas given
in Example~\ref{ex-nc} and Proposition~\ref{prop-7arr}. This seems to suggest that $\NC$-compact
orbifolds are a reasonable class of orbifolds to work with for our purposes.

\begin{prop}
\label{prop-nonkaeler}
Compact orbifold groups are not necessarily $\NC$-compact orbifold groups, and thus not projective groups.
\end{prop}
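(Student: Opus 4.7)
The plan is to promote $G=\pi_1^{\orb}(\bp^2,\varphi_\LL)$ from Proposition~\ref{prop-7arr} to a compact orbifold group that is neither projective nor $\NC$-compact, thus giving negative answers to both parts of the preceding question. By construction $G$ is a compact orbifold group, so the work lies in the two non-realizability claims, both of which I will deduce from the single statement that $G$ is not a K\"ahler group.

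The key is the presentation~\eqref{eq-pres-g'} of the commutator subgroup $G'$, a normal subgroup of finite index $|G^{\ab}|=|(\bz/2)^6|=64$ in $G$. From that presentation $\langle c\rangle$ is central in $G'$, the quotient $G'/\langle c\rangle$ is free abelian of rank~$4$, and $[G',G']=\langle c^4\rangle$, so $G'$ is nilpotent of class exactly~$2$. Since Proposition~\ref{prop-7arr} shows that $c$ has infinite order, the $2$-step identity $[a_1^n,a_2^m]=[a_1,a_2]^{nm}=c^{4nm}$ is non-trivial whenever $nm\neq 0$; any finite-index subgroup $H\leq G'$ contains powers $a_1^n,a_2^m$ with $nm\neq 0$, so $H$ is non-abelian. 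Hence $G'$ is not virtually abelian. Invoking the Arapura-Nori theorem, according to which any finitely generated nilpotent K\"ahler group is virtually abelian, it follows that $G'$ is not a K\"ahler group. Since K\"ahlerness is inherited by finite-index subgroups, $G$ is not K\"ahler either, and in particular not projective.

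For the $\NC$-compact assertion, I invoke the higher-dimensional analog of Remark~\ref{rem-fm}: an $\NC$-compact orbifold $(\bar{X},\varphi)$ admits a smooth projective Galois cover obtained from a Kummer construction ramified along the $D_i$ with multiplicity~$n_i$, exhibiting $\pi_1^{\orb}(\bar{X},\varphi)$ as an extension of a finite abelian group by the fundamental group of a smooth projective variety, and hence as a projective group. Given this, the non-projectivity of $G$ established above immediately rules out $G$ being an $\NC$-compact orbifold group.

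The main obstacle is invoking the Arapura-Nori theorem correctly: one needs to check from~\eqref{eq-pres-g'} both that $G'$ is honestly nilpotent of class~$2$ and that it fails to be virtually abelian, and the latter step uses the infinite order of $c$ in a non-trivial way via the $2$-step commutator identity. The ``$\NC$-compact implies projective'' assertion is likewise delicate, since a Kummer cover along a normal crossing divisor introduces abelian quotient singularities, but this fits the definitional framework set up earlier in the excerpt and is handled by resolving or by passing to a further smooth finite cover.
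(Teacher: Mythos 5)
Your reduction to the commutator subgroup and the group-theoretic analysis of $G'$ are correct and run parallel to the paper: $G'$ has finite index $64$, is torsion-free, $2$-step nilpotent, and your computation $[a_1^n,a_2^m]=c^{4nm}\neq 1$ (using Proposition~\ref{prop-7arr} to know $c$ has infinite order) does show it is not virtually abelian. The fatal gap is the step converting this into non-K\"ahlerness. The Arapura--Nori theorem states that a virtually \emph{solvable} K\"ahler (or projective) group is virtually \emph{nilpotent}; it does not state that a finitely generated nilpotent K\"ahler group is virtually abelian. That latter statement is a well-known open question in the theory of K\"ahler groups, settled only under extra hypotheses (Campana, Carlson--Toledo). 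Worse, your $G'$ is rationally the central extension of $\bq^4$ by $\bq$ determined by a \emph{nondegenerate} skew form (two hyperbolic blocks), which is exactly the family of $2$-step nilpotent groups whose Malcev Lie algebras are quadratically presented and which constitute the first open cases of that question. So the appeal to a black-box structure theorem cannot be fixed by a better citation; some concrete obstruction tailored to $G'$ is needed. The paper's own route is to show $G'$ is not $1$-formal via Massey products and then invoke the Deligne--Griffiths--Morgan--Sullivan formality theorem; whatever one thinks of the details of that computation, it is an obstruction of a genuinely different (and in principle available) kind from the one you invoke, which is not a theorem.

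There is a secondary gap in your treatment of ``$\NC$-compact implies projective.'' The Kummer cover you describe requires a finite abelian quotient of $\pi_1^{\orb}(\bar{X},\varphi)$ in which each meridian $\mu_i$ has order exactly $n_i$ and the local groups at the crossings inject; such a quotient need not exist for a general $\NC$-compact orbifold, so this global uniformization claim is unjustified. The paper sidesteps this by passing to $G'$ first: if $G$ were an $\NC$-compact orbifold group, so would be its finite-index subgroup $G'$ (by the orbifold covering theory of \S\ref{sec-covers}); since $G'$ is torsion-free by~\eqref{eq-pres-g'} and each meridian satisfies $\mu_i^{n_i}=1$ with $n_i\geq 1$ in a compact structure, every meridian would be trivial, forcing $G'=\pi_1(\bar{Y})$ for a projective $\bar{Y}$ and hence $G'$ projective and K\"ahler. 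You should replace the Kummer argument by this torsion-freeness argument; but even then your proof remains incomplete until the non-K\"ahlerness of $G'$ is established by a valid obstruction.
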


\begin{proof}
Consider the compact orbifold group $G$ presented in Proposition~\ref{prop-7arr}. 
The subgroup $G'$ is of finite index. We can describe $G'$ a central extension of $\bz^4$ by $\bz$
as it is deduced from the presentation~\eqref{eq-pres-g'}. If $G$ is an $\NC$-compact orbifold groups,
it is also the case for $G'$. As the only finite-order element of $G'$ is the identity, $G$
should be projective and hence K{\"a}hler. 
One can check that such a group is not 1-formal since it has non-vanishing 
triple Massey products of 1-dim classes. 
Since K\"ahler spaces are formal, and formal spaces have $1$-formal 
fundamental groups (see~\cite{DGMS}), we obtain a contradiction.
\end{proof}

\begin{rem}\label{rem-saturation}
From another point of view, Proposition~\ref{prop-7arr} implies that the local fundamental
group $\pi_1^{\orb}(\bp^2,\varphi_\LL)_{[0:1:0]}$ is infinite and thus the orbifold $(\bp^2,\varphi_\LL)$
has no uniformization in the sense of~\cite[Theorem 2.4]{Uludag-orbifolds}. 
\end{rem}

\begin{rem}
Note that Propositions~\ref{prop-orb-qp} and~\ref{prop-nonkaeler} partially answer questions
posed by Simpson~\cite[\S8]{simpson:11}.
\end{rem}

\section{Saturated orbifolds}\label{sec-saturation}

Since we are mainly interested in orbifold groups it is sometimes useful
to replace in $(\bar{X},\varphi)$ the function $\varphi$ by another
function $\tilde{\varphi}$ where $\tilde{\varphi}(D_i)$ is defined
by the actual order of $\mu_i$ in $\pi_1^{\orb}(\bar{X},\varphi;p_0)$;
we may perform this operation only when $n_i>0$ in order to have
$X_\varphi^+=X_{\tilde{\varphi}}^+$. 
This notion is somehow related with \cite[Condition~(1.3.3)]{namba-branched}.

\begin{definition}\label{def-saturation}
Given an orbifold $(\bar{X},\varphi)$ (for a fixed $\DD$), we say that $\varphi$ is a 
\emph{saturated orbifold structure} if for any meridian $\mu_i$ of $D_i$ (with $n_i>0$), the order of  
$\mu_i$ in $\pi_1^{\orb}(\bar{X},\varphi;p_0)$ is exactly~$n_i$.
\end{definition}

There is a natural way to saturate an orbifold. Unless otherwise stated
we will consider only saturated orbifolds in the sequel. Sometimes an extra saturation
can be performed; even if $n_i=0$, it may happen that $\mu_i$ is of finite order in $\pi_1^{\orb}(\bar{X},\varphi;p_0)$.
Note that in that case if we define $\tilde{\varphi}(n_i)$ to be this order, 
then $X_\varphi^+\subsetneqq X_{\tilde{\varphi}}^+$.

We are going to study different kinds of saturation and their relationship 
with the concept of $\NC$-orbifolds. Let $(\bar{X},\varphi)$ be an orbifold;
if $\DD$ is not a normal crossing divisor there is a sequence $\pi:\bar{Y}\to\bar{X}$
of blowing-ups (which is an isomorphism outside $\mathring{X}$) such that
$\pi^{-1}(\DD)$ becomes a normal crossing divisor. An orbifold structure~$\psi$
can be endowed to $\bar{Y}$ as in Remark~\ref{rem-blowup1}, i.e. $\psi$ vanishes
on any exceptional component of $\pi$. This procedure does not change the orbifold fundamental
group but in general $X_\varphi^+$ and $Y_\psi^+$ are not isomorphic; in particular,
when $(\bar{X},\varphi)$ is not $\NC$, 
$(\bar{Y},\psi)$ is not a compact orbifold even if 
$(\bar{X},\varphi)$ is.

We are going to consider now a more general class of saturations where $X_\varphi^+$ may change
without modifying $\pi_1^{\orb}(\bar{X},\varphi)$.

\begin{definition}\label{def-local-lf}
Let  $(\bar{X},\varphi)$ be an orbifold and let $p\in\bigcup S_\varphi^+$.
Let $\pi:\bar{Y}\to\bar{X}$ be the blowing-up of $p$ and keep the notation
of Remark~\ref{rem-blowup1}.
We say that $p$ is an \emph{$\LF$-point at first level} if the order of
the meridian $\mu_{r+1}$ is finite in $\pi_1(\bar{X},\varphi)_p$.
\end{definition}

Let $\pi:\bar{Y}\to\bar{X}$ be the blowing-up of an $\LF$-point at first level;
let $\hat{\DD}:=\pi^{-1}(\DD)$; with the notation of Remark~\ref{rem-blowup1},
we consider a saturation $\psi$ such that $\psi(D_i):=n_i$, $1\leq i\leq r$
and $\psi(D_{r+1})$ is the order of the meridian~$\mu_{r+1}$ in $\pi_1^{\orb}(\bar{X},\varphi)_p$.

\begin{definition}\label{def-local-lf2}
A point $p$ is an \emph{$\LF$-point}  if all of its infinitely near points are $\LF$-points 
at first level (in particular, if an orbifold is locally finite at a point~$p$ then this point 
is an $\LF$-point).
\end{definition}

\begin{example}\label{ex-lf}
% Let $\pi:\bar{Y}\to\bar{X}$ be the blowing-up of an $\LF$-point at first level;
% let $\hat{\DD}:=\pi^{-1}(\DD)$; with the notation of Remark~\ref{rem-blowup1},
% we consider a saturation $\psi$ such that $\psi(D_i):=n_i$, $1\leq i\leq r$
% and $\psi(D_{r+1})$ is the order of the meridian~$\mu_{r+1}$ in $\pi_1^{\orb}(\bar{X},\varphi)$.
By the very construction~$\pi_1^{\orb}(\bar{X},\varphi)\cong\pi_1^{\orb}(\bar{Y},\psi)$.
Hence if $p$ is an $\LF$-point we can obtain a sequence of blow-ups such that the
divisor $\DD$ becomes a normal crossing divisor \emph{over}~$p$ and such that all the
exceptional divisors have non-zero orbifold indices.
\end{example}

\begin{example}
\label{ex-nc}
If $p\in X_\varphi^+$ is an ordinary double point $p\in D_i\cap D_j$ of $\DD$ then
$\psi(D_{r+1})=\lcm (n_i, n_j)$; if $p\in D_i$ is a smooth point of $\DD$ then $\psi(D_{r+1})=n_i$.
\end{example}

Examples~\ref{ex-nc} and~\ref{ex-lf} show that $\LF$ and $\NC$ compact 
orbifold fundamental groups are the same class of groups.

\begin{example}\label{rem-saturation1}
Locally finiteness may not happen for more complicated singular points.
As a simple example if $p\in\DD$ is an ordinary triple singular point
with orbifold indices for each branch $u,v,w\in\bn$ such that
$\frac{1}{u}+\frac{1}{v}+\frac{1}{w}\leq 1$, then $p$ is not an $\LF$-point.
In the same way if $p$ is an ordinary cusp and the orbifold index is $\geq 6$
then $p$ is not an $\LF$-point.
\end{example}

We set the global version of Definitions~\ref{def-local-lf} and~\ref{def-local-lf2}.

\begin{definition}\label{def-global-lf}
Let  $(\bar{X},\varphi)$ be an orbifold and let $p\in\bigcup S_\varphi^+$.
Let $\pi:\bar{Y}\to\bar{X}$ be the blowing-up of $p$ (keeping again the notation
of Remark~\ref{rem-blowup1}).
We say that $p$ is a \emph{finite-type point at first level} if the order of
the meridian $\mu_{r+1}$ is finite in $\pi_1(\bar{X},\varphi)$.
A point $p$ is a finite-type point if
all of its infinitely near points are finite-type points at first level.
\end{definition}

\begin{rem}\label{rem-saturation2}
Let us consider $\bar{X}=\bp^2$, $\DD$ the union of three lines through a point~$p$
with orbifold indices $u,v,w\in\bn$ such that
$\frac{1}{u}+\frac{1}{v}+\frac{1}{w}\leq 1$. It is clear that~$p$ is not
an $\LF$-point at first level and it is easy to see that it is a finite-type point, since the meridian
around the exceptional component is in fact trivial.
The quadruple points of the arrangement in Proposition~\ref{prop-7arr} are not of finite type. 
Hence the classes of \emph{compact orbifold} and \emph{$\NC$-compact orbifold} groups do not coincide.
\end{rem}

Let us start from a saturated orbifold structure. Hence, if all the points of $\DD$ (it is enough to check 
it for singular points of $\DD$ worse than nodal points)
are $\LF$-points (or finite-type point) we can replace $(\bar{X},\varphi)$ by an $\NC$-orbifold structure
in a surface after successive blowing-ups without changing the fundamental group. 
In the first case we call this structure \emph{locally saturated}; in the second
case it is called \emph{globally saturated}.
Moreover, this can be done respecting the compactness. 
%This can be done for $\LF$-orbifolds.

We finish this section with a new saturation procedure which modifies~$\pi_1^{\orb}(\bar{X},\varphi)$.
An interesting object of study associated with $\pi_1^{\orb}(\bar{X},\varphi)$ is the set of its 
characteristic varieties, see~\S\ref{sec-charvar}, which is a stratification of the space of characters in $\bc^*$. 
Since $H_1^{\orb}(\bar{X},\varphi;\bz)_p$ is generated by the meridians
of the components of $\DD$ passing through~$p$, we can associate to~$D_{r+1}$
the order of $\mu_{r+1}$ in $H_1^{\orb}(\bar{X},\varphi;\bz)_p$ (or in $H_1^{\orb}(\bar{X},\varphi;\bz)$).
The orbifold structure is called \emph{locally homologically saturated} or \emph{globally homologically saturated}.

\begin{example}
If we consider an ordinary triple point where all the components have index~$2$, the local homological saturation
is given by assigning~$2$ to the exceptional component. It is easily seen that the local saturation
assigns index~$4$.
\end{example}

\section{Orbifolds and characteristic varieties}\label{sec-charvar}

The relationship between orbifolds and characteristic varieties (or similar invariants)
appear implicitly in the works of Beauville~\cite{Be} and Arapura~\cite{ara:97}
and explicitly in the works of Campana, e.g.~\cite{Campana-special}, Simpson-Corlette~\cite{cs:08},
Delzant~\cite{Delzant} and ourselves~\cite{ACM-prep}, among others. 
Except in Campana's work, the relationship comes from the following fact: given a smooth variety
(projective, quasi-projective or K{\"a}hler) the positive-dimensional components of the 
characteristic varieties can be obtained as pull-back by mappings whose targets are orbifolds.
Campana's work focuses on the study of characteristic varieties of
compact K{\"a}hler orbifolds (more precisely, $\NC$-projective orbifolds in the language of~\S\ref{sec-groups}).
In this section we will study the characteristic varieties of
quasi-projective orbifolds. For a detailed exposition of the concept
of characteristic varieties (or Green-Lazarsfeld invariant), the reader
can check any of the above references. Some definitions will also be given in~\S\ref{sec-sakuma}.

Before we state the aforementioned results in the context and language of orbifolds we need to recall the
concept of orbifold morphism, which as it occurs in the classical case, allows one to define a morphism of
fundamental groups.

\begin{definition}\label{def-map-orb}
Let $(\bar{X},\varphi), (\bar{Y},\psi)$ be orbifolds with divisors $\DD:=\bigcup_{y=1}^r D_j\subset\bar{X}$,
$n_j:=\varphi(D_j)$, 
$\EE:=\bigcup_{k=1}^s E_j\subset\bar{Y}$, $m_k:=\psi(E_k)$. A dominant holomorphic map $\Phi:X_\varphi^+\to Y_\psi^+$
defines an \emph{orbifold map} 
$\Phi^{\orb}:(\bar{X},\varphi)\to(\bar{Y},\psi)$
if for each $k\in\{1,\dots,s\}$, the divisor $\Phi^*(E_k)$ can be written as $\sum_{j=1}^r h_{j,k} D_j+m_k H_k$
where $m_k$ divides $n_j h_{j k}$ and $H_k$ is a divisor in $X_\varphi^+$.
\end{definition}

\begin{prop}[\cite{cko,ACM-multiple-fibers}]
Let $\Phi^{\orb}:(\bar{X},\varphi)\to(\bar{Y},\psi)$ be an orbifold map. This map
induces (in a functorial way) a morphism $\Phi^{\orb}_*:\pi_1^{\orb}(\bar{X},\varphi)\to\pi_1^{\orb}(\bar{Y},\psi)$.
Moreover, if $(\bar{Y},\psi)$ is an orbicurve and the generic fiber of  $\Phi^{\orb}$ is irreducible
then $\Phi^{\orb}_*$ is surjective.
\end{prop}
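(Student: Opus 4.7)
\emph{Construction of $\Phi^{\orb}_*$.} The plan for the first claim is to restrict $\Phi$ to $U := \mathring{X}_\varphi \cap \Phi^{-1}(\mathring{Y}_\psi)$, where it becomes a morphism of smooth quasi-projective varieties, yielding $(\Phi|_U)_* \colon \pi_1(U) \to \pi_1(\mathring{Y}_\psi) \twoheadrightarrow \pi_1^{\orb}(\bar{Y},\psi)$, and then to factor this composition first through $\pi_1(\mathring{X}_\varphi)$ and then through $\pi_1^{\orb}(\bar{X},\varphi)$. For the first factoring, $\mathring{X}_\varphi \setminus U$ consists of the irreducible components of the divisors $H_k$; each such component appears in $\Phi^*(E_k)$ with multiplicity a positive multiple of $m_k$, so its meridian in $\pi_1(U)$ maps to a power of $\mu_k^{m_k}$, which is killed in $\pi_1^{\orb}(\bar{Y},\psi)$. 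For the second factoring, the image of each orbifold relator $\mu_j^{n_j}$ has to be identified: if $\Phi(D_j)$ has codimension at least two in $Y_\psi^+$ then $\mu_j$ already maps to $1$ in $\pi_1(\mathring{Y}_\psi)$; if $\Phi(D_j) \subseteq E_k$ divisorially then $\mu_j \mapsto \mu_k^{h_{j,k}}$ and hence $\mu_j^{n_j} \mapsto \mu_k^{n_j h_{j,k}}$, which is trivial in the orbifold group by the divisibility $m_k \mid n_j h_{j,k}$ built into Definition~\ref{def-map-orb}. Functoriality under composition and identities is then inherited from that of the classical $\pi_1$.

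\emph{Surjectivity.} For the second claim, assume $(\bar{Y},\psi)$ is an orbicurve and the generic fiber of $\Phi^{\orb}$ is irreducible. Since $\pi_1^{\orb}(\bar{Y},\psi)$ is generated by the images of loops in $\mathring{Y}_\psi$, it suffices to exhibit preimages of such loops modulo the orbifold relations. The strategy is the classical fibration argument: let $C_0 \subset \mathring{Y}_\psi$ be the complement of the (finitely many) critical values of $\Phi$ that do not already lie in $S_\psi$; over $C_0$ the restriction of $\Phi$ is a smooth morphism whose fibers are all connected (irreducibility of the generic fiber forces every smooth fiber to be connected), hence a topological fibration, so the homotopy long exact sequence yields the surjection $\pi_1(\Phi^{-1}(C_0)\cap\mathring{X}_\varphi) \twoheadrightarrow \pi_1(C_0)$. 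Combined with the divisorial bookkeeping of the first part, which controls the meridians that must be added back when passing from $C_0$ to $\mathring{Y}_\psi$ and down to $\pi_1^{\orb}(\bar{Y},\psi)$, this descends to the desired surjection $\pi_1^{\orb}(\bar{X},\varphi) \twoheadrightarrow \pi_1^{\orb}(\bar{Y},\psi)$.

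\emph{Main obstacle.} The most delicate step is producing a preimage of the meridian $\mu_p$ around an orbifold point $p \in S_\psi$: divisorial considerations alone only furnish $\mu_p^{m_p}$ (from meridians of components of $H_p$) and the various $\mu_p^{h_{j,p}}$ (from meridians of components $D_j$ dominated by $p$), none of which is $\mu_p$ itself. The hypothesis of irreducible generic fiber is precisely what allows a local lift of $\mu_p$ to be closed up by a path in a connected neighbouring fiber, producing the missing preimage. Making this rigorous, together with the bookkeeping for the possibly non-reduced or non-smooth components of the fiber over $p$, is the principal technical difficulty of the argument.
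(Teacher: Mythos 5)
The paper itself offers no proof of this proposition --- it is imported from \cite{cko,ACM-multiple-fibers} --- and your two-step argument is essentially the standard one found there: meridian bookkeeping on the open parts to construct $\Phi^{\orb}_*$, and a locally trivial fibration over a Zariski-open subset $C_0$ of the target curve, with connected fibers, to get surjectivity. The proposal is correct in substance; two points deserve comment. First, in the construction of $\Phi^{\orb}_*$ the right dichotomy for a component $D_j$ is whether or not $\Phi(D_j)\subseteq E_k$ for some $k$, not whether its image has codimension at least two: if $\Phi(D_j)\not\subseteq\EE$, a meridian of $D_j$ maps into a simply connected neighbourhood of a point of $\mathring{Y}_\psi$ and dies, irrespective of the codimension of the image (for instance $D_j$ may dominate the target curve, so the image has codimension zero). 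Second, your ``main obstacle'' is not actually an obstacle. Once $C_0$ is chosen so that $\Phi$ restricted to $\Phi^{-1}(C_0)\cap\mathring{X}_\varphi$ is a locally trivial fibration with connected fiber $F$, the meridian $\mu_p$ of an orbifold point $p\in S_\psi$ is already represented by a loop inside $C_0$, because $\pi_1(C_0)\twoheadrightarrow\pi_1(\mathring{Y}_\psi)$; it therefore lifts by exactness of $\pi_1(F)\to\pi_1(\Phi^{-1}(C_0)\cap\mathring{X}_\varphi)\to\pi_1(C_0)\to\pi_0(F)=1$, and the commutative square relating this to the orbifold groups finishes the argument. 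The special fiber over $p$, reduced or not, never enters; what you describe as the delicate closing-up of a lifted path inside a connected neighbouring fiber is precisely the (already rigorous) proof of exactness of that sequence, so no additional work is needed there.
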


There are two main examples of orbifold morphisms: either the target is an orbicurve or
the orbifolds have the same dimension. The last case ({\'etale} or branched covers) is specially
interesting when all the fibers are finite. 

Let us compare the following results. We use the language of \S\ref{sec-groups} if needed.

\begin{thm}[{\cite[Theorem~1]{ACM-prep}}]\label{thmprin}
Let $X$ be a smooth quasi-projective variety and let $\mathcal{V}_k(X)$ be the $k$-th characteristic 
variety of $X$. Let $V$ be an irreducible component of~$\mathcal{V}_k(X)$. 
Then one of the two following statements holds:
\begin{enumerate}
\enet{\rm(\arabic{enumi})}
\item\label{thmprin-orb} 
There exists an orbicurve $(\bar{C},\psi)$, an orbifold morphism $\Phi^{\orb}:X\to (\bar{C},\psi)$ and 
an irreducible component $W$ of $\mathcal{V}_k^{\orb}(\bar{C},\psi)$ 
such that $V=(\Phi^{\orb})^*(W)$.
\item\label{thmprin-tors} $V$ is an isolated torsion point not of type~\ref{thmprin-orb}.
\end{enumerate}
\end{thm}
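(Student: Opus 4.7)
The plan is to bootstrap Arapura's classical theorem on characteristic varieties of smooth quasi-projective varieties, then reinterpret the torsion ``translation'' that appears in the classical statement as a genuine pullback via an orbifold morphism whose target is an appropriately enriched orbicurve.

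First I would recall the classical form of Arapura's theorem: for a smooth quasi-projective variety $X$ and an irreducible positive-dimensional component $V$ of $\mathcal{V}_k(X)$, there exist a surjective holomorphic map $f: X \to C$ with connected generic fiber onto a smooth quasi-projective curve $C$, a subtorus $T \subseteq \Hom(\pi_1(C),\bc^*)$, and a torsion character $\rho$ of $\pi_1(X)$ such that $V = \rho \cdot f^*(T)$. Isolated torsion points not arising in this pullback form immediately constitute case~\ref{thmprin-tors}, so the task reduces to absorbing the torsion translation $\rho$ into an orbifold structure on a smooth completion $\bar{C}$.

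Next, I would exploit the multiple fibers of $f$. Over $\bar{C}$, each scheme-theoretic fiber $f^*(p) = \sum_i h_{i,p} H_{i,p}$ yields the relation $f^*\mu_p = \prod_i \mu_{H_{i,p}}^{h_{i,p}}$ in $\pi_1$, where $\mu_p$ is a meridian around $p$ and $\mu_{H_{i,p}}$ is a meridian around $H_{i,p}$. This suggests defining $\psi(p)$ to be the largest integer $m_p$ such that every character in $\rho\cdot f^*(T)$ annihilates $f^*(\mu_p^{m_p})$, and $\psi(p)=0$ when no such $m_p$ exists. With this choice, $f$ becomes an orbifold morphism $f^{\orb}:X\to(\bar{C},\psi)$ in the sense of Definition~\ref{def-map-orb}, and the proposition above about orbifold morphisms with irreducible generic fiber produces a surjection $f^{\orb}_*:\pi_1(X) \twoheadrightarrow \pi_1^{\orb}(\bar{C},\psi)$. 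Then I would verify that $V$ descends through this surjection: by construction, each character in $V$ vanishes on the kernel of $f^{\orb}_*$, hence lies in the image of $(f^{\orb})^*$; pulling $V$ back yields a subvariety $W$ of the orbifold character torus of $(\bar{C},\psi)$, and one checks that the $k$-th cohomology jump condition transfers, so that $W$ is an irreducible component of $\mathcal{V}_k^{\orb}(\bar{C},\psi)$ with $V=(f^{\orb})^*(W)$, placing $V$ in case~\ref{thmprin-orb}.

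The main obstacle is the delicate calibration of $\psi$: one must simultaneously satisfy the divisibility conditions $m_k\mid n_j h_{j,k}$ required of an orbifold morphism in Definition~\ref{def-map-orb}, ensure that the whole translated torus $V$ (and not merely the specific torsion character $\rho$) descends without a residual twist at each branch point, and verify that the pulled-back subvariety $W$ is genuinely an irreducible component of $\mathcal{V}_k^{\orb}(\bar{C},\psi)$ rather than a proper subvariety thereof. The last verification requires a careful comparison between the first cohomology of $X$ with values in the relevant local system and the corresponding orbifold cohomology of $(\bar{C},\psi)$, typically via a Leray-type spectral sequence argument that transports the dimension jump across $f^{\orb}$.
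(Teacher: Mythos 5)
You should first be aware that the paper you were asked about does not prove this statement at all: it is imported verbatim as Theorem~1 of the preprint \cite{ACM-prep} and used as a black box, so there is no in-paper proof to compare against. Your overall architecture --- start from Arapura's structure theorem $V=\rho\cdot f^*(T)$ for a pencil $f:X\to C$, then absorb the torsion translation $\rho$ into an orbifold structure on $\bar{C}$ supported at the multiple fibers --- is indeed the strategy of the cited reference (and of the companion paper \cite{ACM-multiple-fibers}). So the route is the right one; the problem is that the one step carrying all the content is left unproved.

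Here is the gap, concretely. For $p\in C$ a meridian $\mu_p$ bounds a disc in $C$, so for any $\tau\in T$ and any component $H_{i,p}$ of $f^{-1}(p)$ of multiplicity $h_{i,p}$ one has $(f^*\tau)(\mu_{H_{i,p}})=\tau(\mu_p)^{h_{i,p}}=1$; hence every $\xi=\rho\cdot f^*\tau\in V$ satisfies $\xi(\mu_{H_{i,p}})=\rho(\mu_{H_{i,p}})$. Your $\psi(p)$ is therefore determined purely by the orders of $\rho$ on the meridians of the fibre components (incidentally, the set of exponents $m$ killing these values is closed under taking multiples, so there is no ``largest'' such $m_p$; you want the minimal positive one, and the expression $f^*(\mu_p^{m_p})$ is on the wrong side of $f$). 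For $f$ to be an orbifold morphism to $(\bar{C},\psi)$ in the sense of Definition~\ref{def-map-orb} you need $\psi(p)\mid h_{i,p}$ for every $i$, i.e.\ that the order of $\rho(\mu_{H_{i,p}})$ divides the multiplicity $h_{i,p}$; equivalently, that the translation character $\rho$ produced by Arapura factors through $\pi_1^{\orb}(\bar{C},\psi)$ with $\psi$ given by the multiplicities of the multiple fibres. This is exactly the new theorem being claimed --- Arapura gives no control whatsoever on $\rho$ beyond its being torsion --- and it is the part proved in \cite{ACM-prep} by a careful analysis of $H_1$ of an orbifold pencil. You name it as ``the main obstacle'' and then assert it works out, which is not a proof. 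Two smaller omissions: you silently assume that every zero-dimensional component is a \emph{torsion} point (this is part of the assertion of case~\ref{thmprin-tors}, not of the classical statement you invoke), and the claim that $W$ is a genuine component of $\mathcal{V}_k^{\orb}(\bar{C},\psi)$ for the same depth $k$ needs the explicit computation of orbicurve characteristic varieties rather than only a Leray argument.
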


\begin{thm}[{\cite[Th{\'e}or{\`e}me~3.1]{Campana-special}}]
Let $(\bar{X},\varphi)$ be an $\NC$-compact K{\"a}hler orbifold surface.
Let $V$ be an irreducible component of $\mathcal{V}_k^{\orb}(\bar{X},\varphi)$. 
Then, one of the following statements holds:
\begin{enumerate}
\item $V$ is an isolated torsion point.
\item There exists a compact hyperbolic orbicurve $(\bar{C},\psi)$, where the genus of $\bar{C}$
is at least~$1$, an orbifold map $\Phi^{\orb}:(\bar{X},\varphi)\to(\bar{C},\psi)$
and an irreducible component~$W$ of $\mathcal{V}_k^{\orb}(\bar{C},\psi)$
such that $V=(\Phi^{\orb})^*(W)$.
\end{enumerate}
\end{thm}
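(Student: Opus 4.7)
The approach is to reduce to the classical Beauville--Arapura--Simpson structure theorem for characteristic varieties of compact K\"ahler manifolds by smoothing the orbifold structure via a finite Galois cover. Because $(\bar{X},\varphi)$ is a compact K\"ahler $\NC$-orbifold surface, a Kawamata-type covering construction yields a finite Galois cover $\sigma\colon Y\to\bar{X}$, branched exactly along $\DD$ with ramification indices matching $\varphi$, such that $Y$ is a smooth compact K\"ahler surface. By construction $\pi_1(Y)$ embeds as a finite-index normal subgroup of $\pi_1^{\orb}(\bar{X},\varphi)$ with quotient the Galois group $G$ of $\sigma$, and pullback identifies $\mathcal{V}_k^{\orb}(\bar{X},\varphi)$ with the $G$-invariant part of $\mathcal{V}_k(Y)$, up to a finite-index subgroup of the character torus.

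Next I apply the Beauville--Arapura theorem on $Y$: every positive-dimensional irreducible component of $\mathcal{V}_k(Y)$ is pulled back, via a holomorphic pencil $g\colon Y\to\bar{C}'$ with connected generic fiber, from a component of the characteristic variety of a compact hyperbolic orbicurve $(\bar{C}',\psi')$, with $\psi'$ recording the multiple fibers of $g$. The $G$-invariance of the component upstairs forces $g$ (or a finite family of pencils permuted by $G$) to descend to a holomorphic map $f\colon\bar{X}\to\bar{C}$, where $\bar{C}$ is the quotient of $\bar{C}'$ by the induced $G$-action. I then define $\psi$ on $\bar{C}$ by combining $\psi'$ with the ramification data of $\bar{C}'\to\bar{C}$, and verify the divisibility condition $\psi(q)\mid n_j h_{j,q}$ of Definition~\ref{def-map-orb} using that $\sigma^{-1}(D_j)$ carries multiplicity $n_j$ along its reduced components.

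Hyperbolicity and the genus $\geq 1$ condition on $(\bar{C},\psi)$ then transfer from $(\bar{C}',\psi')$ via an orbifold Riemann--Hurwitz computation: a pencil onto a rational orbicurve with too few orbifold points, or onto a smooth elliptic curve without marked points, cannot support a positive-dimensional pullback component. Components of $\mathcal{V}_k^{\orb}(\bar{X},\varphi)$ not of this pencil type must therefore be zero-dimensional, and Simpson's theorem on the arithmetic structure of Green--Lazarsfeld strata forces them to be torsion, covering case~(1).

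The main obstacle is the equivariant descent: proving that a $G$-equivariant pencil on $Y$ descends to a genuine orbifold morphism $(\bar{X},\varphi)\to(\bar{C},\psi)$ in the sense of Definition~\ref{def-map-orb}, rather than yielding only a rational map of the underlying spaces, with $\psi$ chosen so that the multiplicity condition precisely matches $\varphi$. Once this compatibility is in hand, the pullback identification $V=(\Phi^{\orb})^*(W)$ follows from the commutativity of the induced morphism on character varieties.
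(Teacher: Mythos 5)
First, a caveat: the paper does not prove this statement---it is quoted verbatim from Campana and used only as motivation for Theorem~\ref{thm-acm-camp}, whose proof is the closest model available. That proof goes ``from above'': it realizes $\pi_1^{\orb}(\bar{X},\varphi)$ as $\pi_1$ of a smooth quasi-projective surface $Y$ built by $n_j$-fold blow-ups and deletions (as in Proposition~\ref{prop-orb-qp}), applies the smooth structure theorem (Theorem~\ref{thmprin}) to $Y$, and then painstakingly extends the resulting pencil across the deleted divisors to obtain an orbifold morphism on $(\bar{X},\varphi)$. Your proof goes in the opposite direction, ``from below'', via a finite uniformizing cover, and this is where it breaks.

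The genuine gap is the very first step: a finite Galois cover $\sigma\colon Y\to\bar{X}$ with $Y$ smooth (or with quotient singularities), branched \emph{exactly} along $\DD$ with indices given by $\varphi$, exists only when the orbifold is uniformizable, i.e.\ when the local orbifold groups inject into $\pi_1^{\orb}(\bar{X},\varphi)$ and some finite-index subgroup meets all of their conjugates trivially. This is not automatic even for $\NC$-compact orbifolds: for $\bp^2$ with a single smooth curve of index $n>1$ the meridian may have strictly smaller order in the global group (the saturation issue of \S\ref{sec-saturation}), and even for saturated $\NC$ orbifolds the injectivity of the local $\bz/n_i\times\bz/n_j$ at a node of $\DD$ is a nontrivial global statement; the paper itself stresses that uniformizations can fail to exist (Remark~\ref{rem-saturation}). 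The Kawamata covering trick does produce a smooth $Y$, but only at the cost of branching along auxiliary divisors in general position, which changes $\pi_1(Y)$ and destroys the claimed inclusion of $\pi_1(Y)$ as a finite-index subgroup of $\pi_1^{\orb}(\bar{X},\varphi)$. Even granting such a cover, the assertion that pullback identifies $\mathcal{V}_k^{\orb}(\bar{X},\varphi)$ with the $G$-invariant part of $\mathcal{V}_k(Y)$ is not correct as stated: passing jump loci through a finite cover shifts depths and collapses the finite set of characters trivial on the subgroup---this is precisely the bookkeeping that Sakuma's formula (Theorem~\ref{thm-sakuma}) is designed to control, and it does not yield a clean torus-component bijection. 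Finally, the equivariant descent of the pencil and the verification of the divisibility condition of Definition~\ref{def-map-orb}---which you yourself flag as the main obstacle---is exactly the hard content of the argument (compare the multiplicity computation $n_ja_j\mid b_j$ at the end of the proof of Theorem~\ref{thm-acm-camp}), and it is left unproved here. As it stands the proposal is a plausible sketch only for uniformizable orbifolds, and even there the middle steps need substantial repair.
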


The goal of this section is to state and prove a combination of the above theorems.

\begin{thm}\label{thm-acm-camp}
Let $(\bar{X},\varphi)$ be an $\NC$-quasi-projective orbifold surface.
Let $V$ be an irreducible component of $\mathcal{V}_k^{\orb}(\bar{X},\varphi)$. 
Then, one of the following statements holds:
\begin{enumerate}
\enet{\rm(\arabic{enumi})}
\item\label{thmprin-new-orb} There exists an orbicurve $(\bar{C},\psi)$, an orbifold map 
$\Phi^{\orb}:(\bar{X},\varphi)\to(\bar{C},\psi)$
and an irreducible component~$W$ of $\mathcal{V}_k^{\orb}(\bar{C},\psi)$
such that $V=(\Phi^{\orb})^*(W)$.
\item $V$ is an isolated torsion point.
\end{enumerate}
\end{thm}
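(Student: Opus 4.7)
The approach is to reduce Theorem~\ref{thm-acm-camp} to Theorem~\ref{thmprin} (the smooth quasi-projective case) via the construction used in the proof of Proposition~\ref{prop-orb-qp}. That construction produces a smooth quasi-projective surface $Y$ together with a projective model $\rho:\bar{Y}\to\bar{X}$ (obtained by performing, for each $D_j\in S_\varphi^{>1}$, an $n_j$-fold blow-up at a smooth point of $\mathcal{D}$ on $D_j$) such that $\pi_1(Y)\cong\pi_1^{\orb}(\bar{X},\varphi)$. Since the two groups are isomorphic, their character varieties coincide, $\mathcal{V}_k(Y)=\mathcal{V}_k^{\orb}(\bar{X},\varphi)$, and $V$ is an irreducible component of $\mathcal{V}_k(Y)$. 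Moreover $\mathring{X}_\varphi$ embeds as a Zariski open subset of $Y$, its complement being the union of the topmost exceptional components $E_{n_j,j}$ of the blow-up chains. Applying Theorem~\ref{thmprin} to $Y$, either $V$ is an isolated torsion point and we are done, or there exist an orbicurve $(\bar{C},\psi)$, an orbifold morphism $\Phi^{\orb}:Y\to(\bar{C},\psi)$ with underlying holomorphic map $\Phi:Y\to C_\psi^+$, and an irreducible component $W$ of $\mathcal{V}_k^{\orb}(\bar{C},\psi)$ satisfying $V=(\Phi^{\orb})^*(W)$.

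The remaining task is to descend $\Phi^{\orb}$ to an orbifold morphism on $(\bar{X},\varphi)$. Restricting $\Phi$ to $\mathring{X}_\varphi\subset Y$ gives a holomorphic map $\mathring{\Phi}:\mathring{X}_\varphi\to C_\psi^+\subset\bar{C}$. One may assume $\bar{X}$ is smooth (after resolving any abelian quotient singularities, which does not alter the orbifold fundamental group), and then $\mathring{\Phi}$ extends uniquely to a regular morphism $\bar{\Phi}:\bar{X}\to\bar{C}$ because the target is a smooth projective curve. To verify that $\bar{\Phi}$ defines an orbifold morphism $(\bar{X},\varphi)\to(\bar{C},\psi)$ one writes $\bar{\Phi}^*(E_k)=\sum_j h_{j,k}D_j+m_k H_k$ and must check the divisibility $m_k\mid n_j h_{j,k}$ for each $j$ with $n_j>0$. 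The pullback identity $V=(\bar{\Phi}^{\orb})^*(W)$ in $\mathcal{V}_k^{\orb}(\bar{X},\varphi)$ then transports from $V=(\Phi^{\orb})^*(W)$ in $\mathcal{V}_k(Y)$ via the isomorphism of character varieties, since both induced maps on orbifold fundamental groups are computed from $\Phi|_{\mathring{X}_\varphi}$ and therefore agree.

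I expect the multiplicity verification in the previous paragraph to be the most delicate step, and the efficient way to settle it is group-theoretic. The induced map $\bar{\Phi}_*:\pi_1^{\orb}(\bar{X},\varphi)\to\pi_1^{\orb}(\bar{C},\psi)$ sends a meridian $\mu_j$ of $D_j$ to a conjugate of $\eta_k^{h_{j,k}}$, where $\eta_k$ is a meridian of the unique component $E_k$ containing the image $\bar{\Phi}(D_j)$ (and $\bar{\Phi}_*(\mu_j)=1$ if $\bar{\Phi}|_{D_j}$ is dominant onto $\bar{C}$). Because $\mu_j^{n_j}=1$ in $\pi_1^{\orb}(\bar{X},\varphi)$, one obtains $\eta_k^{n_j h_{j,k}}=1$ in $\pi_1^{\orb}(\bar{C},\psi)$, which is equivalent to $m_k\mid n_j h_{j,k}$, completing the verification.
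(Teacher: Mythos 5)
Your overall strategy is the same as the paper's: pass to the smooth quasi-projective surface $Y$ of Proposition~\ref{prop-orb-qp}, identify $\mathcal{V}_k^{\orb}(\bar{X},\varphi)$ with $\mathcal{V}_k(Y)$, invoke Theorem~\ref{thmprin}, and then descend the resulting orbifold pencil to $(\bar{X},\varphi)$. The gap is in the descent. You assert that $\mathring{\Phi}:\mathring{X}_\varphi\to\bar{C}$ ``extends uniquely to a regular morphism $\bar{\Phi}:\bar{X}\to\bar{C}$ because the target is a smooth projective curve.'' This is false as a general statement: a morphism from a Zariski-open subset of a smooth projective surface to $\bp^1$ need not extend (e.g.\ $[x:y:z]\mapsto[x:y]$ on $\bp^2\setminus\{x=0\}$ has an indeterminacy point at $[0:0:1]$), and the possible indeterminacy points sit exactly on $\DD$, where you need the map. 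Ruling this out is the actual content of the paper's proof: one must first show that $\hat{\Phi}$ is constant on each new exceptional curve $\check{E}_j=E_{n_j,j}\cap Y\cong\bc^*$ (otherwise the target orbicurve would be $\bc^*$ or $\bc_{2,2}$, whose characteristic varieties are finite, contradicting the hypothesis on $V$), then extend $\hat{\Phi}$ across the cyclic quotient singularities obtained by contracting the chains $\bigcup_{k=1}^{n_j-1}E_{k,j}$ of $(-2)$-curves (again excluding indeterminacy via a characteristic-variety argument, since an indeterminacy point would force $\bar{C}\cong\bp^1$ with finite $\mathcal{V}_k^{\orb}(\bar{C},\psi)$), and only then over the strict transforms $D_j$ and their double points. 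None of this is automatic, and your argument supplies no substitute for it. Note also that the extension is only claimed, and only needed, on $X_\varphi^+$, not on all of $\bar{X}$; your preliminary resolution of the quotient singularities of $\bar{X}$ with index $0$ on the exceptional locus changes $X_\varphi^+$ and hence the meaning of ``orbifold morphism from $(\bar{X},\varphi)$.''

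Your final divisibility check is a legitimate variant of the paper's (the paper instead transfers the condition $m_k\mid n_jh_{j,k}$ directly from the orbifold-morphism property of $\hat{\Phi}$, using that the multiplicity of $E_{k,j}$ in $(\Phi\circ\pi)^*(p_j)$ is $k a_j$), but it has two caveats: it presupposes the extension of $\bar{\Phi}$ across a generic point of $D_j$ (otherwise ``$\bar{\Phi}_*(\mu_j)$ is conjugate to $\eta_k^{h_{j,k}}$'' has no meaning), and the implication $\eta_k^{n_jh_{j,k}}=1\Rightarrow m_k\mid n_jh_{j,k}$ requires $(\bar{C},\psi)$ to be saturated in the sense of Definition~\ref{def-saturation}; you should either saturate $\psi$ (which changes neither $\pi_1^{\orb}(\bar{C},\psi)$ nor $\mathcal{V}_k^{\orb}(\bar{C},\psi)$) or argue geometrically as the paper does.
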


\begin{proof}
Let $(\bar{X},\varphi)$ be an $\NC$-quasi-projective orbifold surface. Let $\DD$ be the hypersurface
defining the orbifold structure where we assume that $\DD=\bigcup_{j=1}^{r+s} D_j$, where
$n_j\geq 2$ if $1\leq j\leq r$ and $n_{r+k}=0$ if $1\leq k\leq s$. We may assume the orbifold
structure is saturated.

We proceed as in the proof of Proposition~\ref{prop-orb-qp}. Let $\pi:\bar{Y}\to\bar{X}$ the composition
of the $\sum_{j=1}^r n_j$ blow-ups indicated in that proof. We denote by $D_i$ the strict transforms
of $D_i$ and by $E_{k,j}$, $1\leq k\leq n_j$, $1\leq j\leq r$, the exceptional components of~$\pi$.
Let $Y:=\bar{Y}\setminus\bigcup_{j=1}^r \left(D_j\cup\bigcup_{k=1}^{n_j-1} E_{k,j}\right)$.
Recall that $\pi_1^{\orb}(\bar{X},\varphi)\cong\pi_1(Y)$.

We can apply Theorem~\ref{thmprin} to $Y$. Let us consider a component $V$ of $\mathcal{V}_k(Y)$
of type~\ref{thmprin-orb} and consider the orbifold map given in the statement. Let us write
this orbifold map in the language of \S\ref{sec-groups}. We consider in $\bar{Y}$ the hypersurface
$$
\hat{\DD}=\bigcup_{j=1}^r\left(D_j\cup\bigcup_{k=1}^{n_j} E_{k,j}\right)\cup\bigcup_{\ell=1}^s D_{r+\ell},
$$
and the map $\hat{\varphi}$ given by:
$$
\underset{1\leq j\leq r+s}{\hat{\varphi}(D_j)=0},\quad
\underset{1\leq k< n_j, 1\leq j\leq r}{\hat{\varphi}(E_{k,j})}=0,\quad
\underset{1\leq j\leq r}{\hat{\varphi}(E_{n_j,j})=1}.
$$
Since $Y_{\hat{\varphi}}^+=Y$, the map given by Theorem~\ref{thmprin} can be written as
$\hat{\Phi}^{\orb}:(\bar{Y},\hat{\varphi})\to(\bar{C},\psi)$.
Let us consider $\hat{\Phi}:Y\to\bar{C}$ the underlying dominant holomorphic mapping.

Note that $\check{E}_j:=E_{n_j,j}\cap Y$ is isomorphic to $\bc^*$. Let us assume that
$\hat{\Phi}_{\check{E}_j}$ is not constant and hence dominant on $\bar{C}$; in particular,
it determines an orbifold morphism $\hat{\Phi}^{\orb}:(E_{n_j,j},\varphi_j)\to (\bar{C},\psi)$
where $\varphi_j$ is the induced orbifold structure, which is the trivial one. The only
possible choices for $(\bar{C},\psi)$ are either $\bc^*$ (with smooth structure) or $\bc_{2,2}$;
the characteristic varieties of these orbifolds are finite and we are led to a contradiction.

Then, we have proven that $\hat{\Phi}_{\check{E}_j}$ is constant and
denote by $p_j\in\bar{C}$ its image. Let us consider a small neighborhood
$\mathcal{U}_j$ of $\bigcup_{k=1}^{n_j-1} E_{k,j}$; this curve is a linear 
chain of rational smooth curves with self-intersection~$-2$ and the space
$\tilde{\mathcal{U}}_j$ obtained from $\mathcal{U}_j$ by contracting the curves is
isomorphic to the quotient of
a neighborhood $\tilde{\mathcal{U}}_j$ of the origin in $\bc^2$ by the action of 
a cyclic group of order~$k_j$. We may lift $\hat{\Phi}$ to a dominant morphism 
$\hat{\Phi}_j:\tilde{\mathcal{U}}_j\setminus\{0\}\to\bar{C}$; it is easily seen that
if $\hat{\Phi}_j$ cannot be extended to the origin, then $\bar{C}\cong\bp^1$
and the characteristic varieties of $(\bar{C},\psi)$ are finite. Since this is not possible,
$\hat{\Phi}_j$ can be extended and $\hat{\Phi}$ can be extended to $\bigcup_{k=1}^{n_j-1} E_{k,j}$
by sending the curve to $p_j$.

A similar argument allows us to extend $\hat{\Phi}$ to the regular part of $\hat{\DD}$ in $D_j$;
moreover it is also possible to extend it to $D_j\cap E_{n_j,j}$ (with image $p_j$).
Finally we can extend it to the double points $D_i\cap D_j$, $1\leq i<j\leq r$.
Moreover, since this map is constant on  $\bigcup_{k=1}^{n_j} E_{k,j}$,
we can contract these divisors (the exceptional divisors of $\pi$) and we obtain
a holomorphic map $\Phi:X_\varphi^+\to\bar{C}$. 

All we are left to do is to check that $\Phi$ defines the required orbifold morphism.
Before we prove this, note that $\Phi_*$ induces a morphism of orbifold fundamental groups. To see this,
let $\mu_j$ be a meridian around $D_j$; note that $\mu_j^{n_j}$ is a meridian of $E_{n_j,j}$ whose image
by $\Phi_*$ is trivial and hence the map induces a morphism of the orbifold fundamental groups. 

Let us assume that $D_j$ is contained in the preimage of $p_j$ and let
us compute its multiplicity in $\Phi^*(p_j)$, say $a_j$. If we compose $\Phi$ and $\pi$ the multiplicity 
of $E_{k,j}$ in the divisor defined by $p_j$ equals $k a_j$. Let $b_j$ the multiplicity of $p_j$ by $\psi$;
the condition of Definition~~\ref{def-map-orb} for orbifold morphism implies that $n_j a_j$ divides $b_j$
which is exactly the needed
condition for $\Phi$. Hence, the required $\Phi^{\orb}:(\bar{X},\varphi)\to(\bar{C},\psi)$ is constructed.
\end{proof}

\section{Unbranched and branched orbifold covers}\label{sec-covers}

One of the advantages of using orbifold fundamental groups is that we can study standard ramified covers
as unbranched orbifold covers. For technical reasons, we restrict our attention to $\NC$-orbifolds.

\begin{definition}
\label{def-unbranched}
We call an orbifold morphism $\pi:(\bar{Y},\varphi_Y)\to (\bar{X},\varphi_X)$ an \emph{orbifold unbranched covering}
if the fibers of $\pi$ are finite and the following equality holds
$$\nu_{(\bar{Y},\varphi_Y)}(y)\cdot\deg \pi_y=\nu_{(\bar{X},\varphi_X)}(x)$$ 
$\forall x\in X_\varphi^+$, $\forall y\in \pi^{-1}(x)$ 
(see Definition~\ref{def-nu}).
\end{definition}

\begin{rem}
For the shake of simplicity we will often
refer to  \emph{orbifold unbranched covering} as \emph{unbranched covering}.
Note that usual \emph{unbranched covering} are actually \emph{orbifold unbranched covering}.
\end{rem}

The main point in Definition~\ref{def-unbranched} is that \emph{orbifold unbranched coverings} behave for 
orbifold fundamental groups as unbranched coverings behave for fundamental groups. In particular, the monodromy 
action completely determines the orbifold unbranched coverings.

\begin{prop}
\label{prop-covers}
An orbifold unbranched covering induces an injective morphism on orbifold fundamental groups. Moreover,
let $(\bar{X},\varphi_X)$ be an orbifold and let us denote $G:=\pi_1^{\orb}(\bar{X},\varphi_X)$. Let $H\subset G$
be a finite-index subgroup; then there is an orbifold unbranched covering $\pi:(\bar{Y},\varphi_Y)\to (\bar{X},\varphi_X)$
such that $\pi_*(\pi_1^{\orb}(\bar{Y},\varphi_Y))=H$. Moreover, $(\bar{Y},\varphi_Y)$ is essentially unique (i.e.,
both $Y_{\varphi_Y}^+$ and $S_{\varphi_Y}^+$ are unique up to isomorphism).
\end{prop}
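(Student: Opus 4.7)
The plan is to reduce both parts of the statement to ordinary covering space theory on smooth quasi-projective manifolds, by invoking the construction of Proposition~\ref{prop-orb-qp}. Given an $\NC$-orbifold $(\bar{X},\varphi_X)$, that construction produces a smooth quasi-projective surface $\tilde{X}$ by iteratively performing an $n_i$-fold blow-up along each $D_i\in S_{\varphi_X}^{>1}$ and then removing all exceptional components except the $n_i$-th, so that $\pi_1(\tilde{X})\cong\pi_1^{\orb}(\bar{X},\varphi_X)=G$.

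For the injectivity claim, starting from an orbifold unbranched covering $\pi:(\bar{Y},\varphi_Y)\to(\bar{X},\varphi_X)$, I would show that $\pi$ lifts to an honest topological covering $\tilde{\pi}:\tilde{Y}\to\tilde{X}$ between the smooth models, whence $\tilde{\pi}_*$ is automatically injective. The defining condition $\nu_{(\bar{Y},\varphi_Y)}(y)\cdot\deg\pi_y=\nu_{(\bar{X},\varphi_X)}(x)$ is exactly what is required for such a lift to exist: at a smooth point of $D_i$ with preimage on $D_{i'}\subset\bar{Y}$ of local degree $d$, one has $n_{i'}=n_i/d$, so the $n_{i'}$-fold blow-up upstairs matches the $n_i$-fold blow-up downstairs, the $k$-th exceptional meridian on the $Y$-side being sent to the $d$-th power of the $(kd)$-th exceptional meridian on the $X$-side by Remark~\ref{rem-nfold}. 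A similar toric verification handles the crossings $D_i\cap D_j$ and the abelian quotient singularities allowed by Definition~\ref{def-nu}.

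For existence, given a finite-index subgroup $H\subset G$, I would take the connected topological covering $\tilde{Y}\to\tilde{X}$ classified by $H$, which is a smooth quasi-projective surface, and then compactify it with an orbifold structure so as to invert the construction of Proposition~\ref{prop-orb-qp}. The completion is forced locally. Above a smooth point of $D_i\subset\bar{X}$, each orbit of $\mu_i$ on the coset set $G/H$ has some size $d$ dividing $n_i$ (since $\mu_i^{n_i}=1$ in $G$); each such orbit contributes a single smooth point to the compactification, lying on a new component $D_{i'}$ to which one must assign the orbifold index $n_i/d$, in accordance with Definition~\ref{def-unbranched}. At a crossing $p\in D_i\cap D_j$ one applies the same analysis to the commuting pair $(\mu_i,\mu_j)$, each joint orbit producing a normal-crossing point of $\DD_Y$ carrying the two induced indices. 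Gluing these local models yields the $\NC$-orbifold $(\bar{Y},\varphi_Y)$, and by construction $\pi_*(\pi_1^{\orb}(\bar{Y},\varphi_Y))=H$.

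Uniqueness then has two layers: the topological covering $\tilde{Y}\to\tilde{X}$ is unique up to isomorphism by ordinary covering space theory, and both the compactifying locus and the orbifold indices are forced by the equality in Definition~\ref{def-unbranched}. I expect the main technical obstacle to be the local analysis at crossing points $D_i\cap D_j$ (and at abelian quotient singularities of $\bar{X}$): one must verify that the orbit structure of the commuting meridians on $G/H$ is genuinely compatible with completing to an $\NC$-orbifold, possibly introducing new abelian quotient singularities on $\bar{Y}$. This is a toric calculation relying crucially on the $\NC$-hypothesis, and is the reason the section restricts attention to that class.
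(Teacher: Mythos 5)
The paper does not actually write out a proof here: its entire argument is the one-line remark that the statement follows by rewriting Namba's Theorem~1.3.9 \cite{namba-branched} in orbifold language. Namba's theorem sets up the correspondence between (conjugacy classes of) finite-index subgroups of $\pi_1(\mathring{X}_\varphi)$ containing the normal closure of the $\mu_j^{n_j}$ and finite branched coverings with bounded branching, by taking the unramified cover of the complement classified by the subgroup and then forming its Fox completion; injectivity, existence and essential uniqueness all come packaged with that correspondence. Your route --- transporting everything to the smooth quasi-projective models of Proposition~\ref{prop-orb-qp}, so that ordinary covering space theory applies to spaces whose honest $\pi_1$ \emph{is} the orbifold group --- is a genuinely different and more self-contained presentation, and it makes the injectivity claim transparent. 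What it buys in clarity it pays for in exactly the local verifications you defer: (i) that the branched cover really extends to an unramified cover over the retained exceptional curves (in a weighted-blow-up chart the induced map is $(s,w)\mapsto(s,w^d)$ with ramification along $\{w=0\}$, which lies among the removed components, so this does work); (ii) that the completion at crossings and over the quotient singularities of $\bar{X}$ yields only abelian quotient singularities; and (iii) that the completed $\bar{Y}$ is again a projective variety, which requires Grauert--Remmert or normalization in the function field --- this last point is left implicit in your sketch and is precisely what Namba's framework supplies for free. Two small corrections: by Remark~\ref{rem-nfold} and $\pi_*(\mu_{i'})=\mu_i^{d}$, the $k$-th exceptional meridian upstairs is sent to the $(kd)$-th exceptional meridian downstairs, not to its $d$-th power (what matters is only the case $k=n_{i'}$, giving $\mu_i^{n_i}$); and in the existence step you should note that the orbit size $d$ of $\mu_i$ on $G/H$ is constant along each component $D_{i'}$ lying over $D_i$, which is what makes the assignment $\varphi_Y(D_{i'})=n_i/d$ well defined.
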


As in the standard case, the cover is said to be \emph{regular} or \emph{Galois} if $H\unlhd G$; in that case
the group $G/H$ acts on $Y_{\varphi_Y}^+$ with quotient $X_{\varphi_X}^+$.

\begin{prop}
An orbifold unbranched covering satisfies both the path and homotopy lifting properties and
are determined by the monodromy representation $\rho:\pi_1^{\orb}(X,\varphi_X)\to\Sigma_n$, $n:=\# G/H$.
\end{prop}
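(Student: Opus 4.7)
The plan is to reduce the lifting properties to the classical theory of finite étale covers by using local orbifold uniformizations across the orbifold and singular locus, and then to derive the monodromy classification from Proposition~\ref{prop-covers}. As a first step I would note that on $\Reg(\mathring{X}_{\varphi_X})$ the degree identity $\nu(y)\cdot\deg\pi_y=\nu(x)$ together with $\nu(x)=1$ forces $\nu(y)=\deg\pi_y=1$ for every $y\in\pi^{-1}(x)$, so $\pi$ restricts to an ordinary finite étale cover of complex manifolds on which classical path and homotopy lifting hold without further work.

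To extend these lifts across points $x\in\bigcup S_{\varphi_X}$ or $\Sing(\bar{X})$, I would work in local orbifold charts. Near such an $x$ choose a uniformization $U/\Gamma_x$ with $U\subset\bc^n$ a polydisk and $|\Gamma_x|=\nu(x)$; for each $y\in\pi^{-1}(x)$ choose $V_y/\Gamma_y$ with $|\Gamma_y|=\nu(y)$. The degree identity should translate precisely into the statement that $\Gamma_y$ embeds into $\Gamma_x$ with index $\deg\pi_y$, so after shrinking the charts $\pi$ is locally modeled by the canonical quotient $U/\Gamma_y\to U/\Gamma_x$. A path or homotopy in $X_{\varphi_X}^+$ meeting $x$ then lifts uniquely to $U$ by simple connectivity of the polydisk, and pushing down to $U/\Gamma_y$ picks out one of the $[\Gamma_x:\Gamma_y]$ preimages of $y$; the choice is pinned down by the prescribed starting point. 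Classical uniqueness on overlaps glues these local lifts with the étale ones away from the orbifold locus into unique global lifts, delivering both the path and homotopy lifting properties.

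For the monodromy classification I would fix $x_0\in\Reg(\mathring{X}_{\varphi_X})$ and identify $\pi^{-1}(x_0)=\{1,\dots,n\}$ with $n=[G:H]$ via Proposition~\ref{prop-covers}. Path lifting then defines a well-posed action of $G=\pi_1^{\orb}(\bar{X},\varphi_X;x_0)$ on the fiber, i.e., a homomorphism $\rho\colon G\to\Sigma_n$; the stabilizer of a point is conjugate to~$H$, and connectedness of $(\bar{Y},\varphi_Y)$ amounts to transitivity of~$\rho$. Conversely, given any $\rho$, the stabilizers on each $G$-orbit supply finite-index subgroups to which Proposition~\ref{prop-covers} applies, and taking disjoint unions of the resulting connected covers produces the prescribed $\rho$; these two constructions are mutually inverse up to conjugacy. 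The hard part will be the local modeling step in the middle paragraph: one must genuinely verify that the numerical identity $\nu(y)\cdot\deg\pi_y=\nu(x)$ forces a group-theoretic inclusion $\Gamma_y\hookrightarrow\Gamma_x$ of the right index, and that the uniformizing charts around the various $y\in\pi^{-1}(x)$ can be chosen coherently over a single chart at~$x$. Once that is settled the rest is routine covering-space bookkeeping.
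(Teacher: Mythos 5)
The paper gives no self-contained argument here: it reduces both this statement and Proposition~\ref{prop-covers} to Namba's Theorem~1.3.9 on branched coverings, i.e.\ it works with the genuine topological covering over the complement of the branch locus, applies classical lifting theory there, and checks that the resulting monodromy representation of $\pi_1(\mathring{X}_{\varphi_X})$ factors through $\pi_1^{\orb}(\bar{X},\varphi_X)$ because the condition $\nu(y)\cdot\deg\pi_y=\nu(x)$ of Definition~\ref{def-unbranched} forces each meridian power $\mu_j^{n_j}$ to act trivially on the fiber. Your route is different, and its central step fails. The local model $U\to U/\Gamma_x$ is a \emph{branched} covering, not a covering map, so ``lifts uniquely to $U$ by simple connectivity of the polydisk'' is not available: for the order-two rotation group acting on a disk, i.e.\ $z\mapsto z^2$, a path passing through the origin admits two distinct continuous lifts with the same starting point (upon reaching $0$ one may continue along either branch of the square root), so uniqueness of path lifting fails exactly at the points where you invoke it --- and uniqueness is what you need for $\rho$ to be well defined by your recipe. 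Worse, even \emph{existence} of homotopy lifts fails in this naive sense: a homotopy meeting the origin transversally at an interior point of the square admits no continuous lift, since such a lift would provide a continuous square root along a small circle on which the winding number about $0$ is one.

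The statement therefore has to be read in the orbifold (Fox-completion) sense, which is what the citation to Namba supplies: your first paragraph correctly shows that $\nu(x)=1$ forces $\pi$ to be finite \'etale over $\Reg(\mathring{X}_{\varphi_X})$; one defines $\rho$ there by classical path lifting and then uses the numerical condition to see that each $\mu_j^{n_j}$ acts as the identity permutation (each meridian acts as a product of cycles of length dividing $n_j$), so that $\rho$ descends to $\pi_1^{\orb}(\bar{X},\varphi_X)$. With $\rho$ defined this way, your last paragraph (stabilizers conjugate to $H$, transitivity versus connectedness, the inverse construction via Proposition~\ref{prop-covers}) is routine and correct. Finally, you rightly flag but do not close the remaining gap --- that $\nu(y)\cdot\deg\pi_y=\nu(x)$ yields the local model $U/\Gamma_y\to U/\Gamma_x$ with $\Gamma_y\hookrightarrow\Gamma_x$ of index $\deg\pi_y$; this is needed for the uniqueness part of Proposition~\ref{prop-covers} and is again exactly what the reduction to Namba's theorem provides.
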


A proof of these results can be found basically rewriting~\cite[Theorem~1.3.9]{namba-branched} in the 
language of orbifolds instead of in the language of branched coverings.

\begin{example}
Consider $\bp^1_{2,3,5}$ and the subgroup of 
$G:=\pi_1^{\orb}(\bp^1_{2,3,5})=\langle \mu_2, \mu_3, \mu_5 : \mu_2^2=\mu_3^3=\mu_5^5=(\mu_2\mu_3\mu_5)=1, \rangle$ 
given by the kernel of 
\begin{equation}
\array{crcl}
\rho:& G & \to & \Sigma_5\\
&\mu_2 & \mapsto & (1,5)(2,3) \\
&\mu_3 & \mapsto & (1,4,3) \\
&\mu_5 & \mapsto & (1,2,3,4,5). \\
\endarray
\end{equation}
Note that the preimage of the orbifold point of order 2 has three points. For two of them, the local degree of
the map is 2 (and hence their index is 1) whereas on the remaining point the local degree of $\rho$ is~1 (and hence
it should become a point of index~2). Analogously, around the orbifold point of order 3, the preimage has three
points: two of which will have orbifold index 3 and one with orbifold index 1. Finally, around the orbifold point
of index 5, the preimage is a local uniformization. Hence the local conditions on the orbifold points of the covering
are given to satisfy Definition~\ref{def-unbranched}. A simple Euler characteristic computation shows that $\rho$
induces in fact a (non-regular) unbranched covering from $\bp^1_{2,3,3}$ to $\bp^1_{2,3,5}$ of order~5.
\end{example}

\begin{example}
Consider the following morphism:
\begin{equation}
\array{crcl}
\pi : & \bp^1 & \to & \bp^1\\
& [x:y] & \mapsto & [(x^3-y^3)^2:(x^2+y^2)^3] \\
\endarray
\end{equation}
Generically, fibers have 6 different preimages. The special fibers are at
$[1:0]$ (the roots of $(x^2+y^2)^3$), $[0:1]$ (the roots of $(x^3-y^3)^2$), $[1:1]$ 
(the roots of $y^2x^2(2xy+3x^2+3y^2)$), and $[2:1]$ (the roots of $(x^4-2yx^3-2xy^3+y^4)(y+x)^2$).
Therefore this induces a non-regular unbranched covering from $\bp^1_{6(2),2(3)}$ to $\bp^1_{3(2),3}$
of order~6 (where the subindex $k(m)$ stands for $k$ points of index $m$). 
\end{example}

\begin{definition}
An orbifold unbranched cover $\pi:(\bar{Y},\varphi_Y)\to(\bar{X},\varphi_X)$ is a~\emph{uniformization}
of $(\bar{X},\varphi_X)$ if $\bar{Y}$ does not contain points of orbifold index greater than~$1$.
The uniformization will be \emph{Galois} or \emph{regular} if $\pi$ realizes a quotient of $Y_\varphi^+$
by the action of a finite group (which may not act freely).
\end{definition}

\begin{rem}
As in the standard case, a uniformization (or more generally an unbranched cover)
is Galois if and only if the image~$G_Y$ of~$\pi_1^{\orb}(\bar{Y},\varphi_Y)$ in
$G_X:=\pi_1^{\orb}(\bar{X},\varphi_X)$ is a normal subgroup (the group action is carried by $G_X/G_Y$).
Recall that if $\pi$ is a finite Galois uniformization, then the image of a meridian of a component 
with orbifold index $n_i>1$ by the monodromy action is a product of cycles of the same length~$n_i$ 
(with no fixed points). This is not a characterization of Galois uniformization as 
Example~\ref{ex-virtualunif} shows. This condition is called \emph{virtual regularity} in~\cite{ls:89}.

Also note that saturation (see Definition~\ref{def-saturation}) is trivially a necessary condition for the 
existence of a uniformization.
\end{rem}

\begin{example}\label{ex-virtualunif}
Let us consider an orbicurve $(\bar{C},\varphi)$ where $\bar{C}$ is an elliptic curve and the divisor
contains two points of index~$2$. Recall that
$$
\pi_1^{\orb}(\bar{C},\varphi)=\langle a,b,x,y\mid a^2=b^2=1,a b=[x,y]\rangle
$$
Consider the morphism:
\begin{equation}
\array{crcl}
\rho:& \pi_1^{\orb}(\bar{C},\varphi) & \to & \Sigma_4\\
&a,x & \mapsto & (1,2)(3,4) \\
&b & \mapsto & (1,3)(2,4) \\
%&x & \mapsto & (1,2)(3,4) \\
&y & \mapsto & (1,2,3).
\endarray
\end{equation}
This morphism defines an unbranched orbifold cover; using Riemann-Hurwitz formula the source
of this cover is a Riemann surface of genus~$3$ (with no point of orbifold index greater than~$1$).
This is an example of a uniformization which is virtually regular, but not regular.
\end{example}

For our purposes, a more global and regular definition of unbranched covering will be enough.

\begin{definition}[\cite{Kato-uniformization,Uludag-orbifolds}]
Let $(\bar{X},\varphi)$ be an orbifold. We say $(\bar{X},\varphi')$ is a \emph{suborbifold} of $(\bar{X},\varphi)$ 
(or equivalently $(\bar{X},\varphi)$ is a \emph{superorbifold} of $(\bar{X},\varphi')$)
if $\varphi'(D_i)|\varphi(D_i)$ (meaning there exists $k\in \bz\setminus \{0\}$ such that 
$\varphi(D_i)=k \varphi'(D_i)$ in particular, if $\varphi(D_i)=0$, then $\varphi'(D_i)=0$). 
\end{definition}

On the other side branched orbifold coverings can also be defined. The definitions will be straightforward
for the orbicurve case. 

\begin{definition}
A Galois covering $\pi:\bar{Y}\to \bar{X}$ between two orbifolds $(\bar{X},\varphi_X)$ and $(\bar{Y},\varphi_Y)$
is a \emph{branched orbifold covering} if there exists a superorbifold structure $(\bar{X},\varphi_s)$ for which
$\pi$ defines an unbranched orbifold covering.
\end{definition}

\section{Sakuma's formul{\ae}}\label{sec-sakuma}

Given an orbifold $(\bar{X},\varphi_X)$, we will define $b_1^{\orb}(\bar{X},\varphi_X)$ as the rank of 
the abelianization of $G_{\varphi_X}:=\pi_1^{\orb}(\bar{X},\varphi_X)$, that is, $\rank (G_{\varphi_X}/G'_{\varphi_X})$. 
After taking a superorbifold, all branched orbifold coverings can be assumed to be unbranched.
Consider $\pi:(\bar{Y},\varphi_Y)\to (\bar{X},\varphi_X)$ an unbranched covering. 

Note that, any unbranched covering $\pi:(\bar{Y},\varphi_Y)\to (\bar{X},\varphi_X)$ produces the action of the 
group of deck transformations $G_\varphi$ over $H_1^{\orb}(\bar{Y},\varphi_Y)$ by conjugation, that is,
consider $\bar g\in G_\varphi$ the class of $g\in G_{\varphi_X}$ and $\bar x\in H_1^{\orb}(\bar{Y},\varphi_Y)$ 
the class of $x\in G_{\varphi_Y}$, then $\bar{g}\cdot \bar{x}=\overline{gxg^{-1}}$. Since 
$\overline{gh ax h^{-1}g^{-1}}=\overline{g[h,ax]ag^{-1} gxg^{-1}}=\overline{gxg^{-1}}$, the action is well defined.
This action endows $H_1^{\orb}(\bar{Y},\varphi_Y)$ with a module structure over the group ring $\bz[G_\varphi]$.
After tensoring by $\bc$, the group $H_1^{\orb}(\bar{Y},\varphi_Y)$ acquires a $\bc[G_\varphi]$-module 
structure. 

Recall the definition of the characteristic variety of a finitely presented group $G$. 
Consider a free resolution of a $\bc[H_1(G)]$-module $M$
$$
\bc[H_1(G)]^m \ \rightmap{\phi}\ \bc[H_1(G)]^n \to M,
$$
then $\Char_k(M):=V(F_k(M))$, where $F_k(M)$ is the $k$-th Fitting ideal (or elementary ideal) of $M$ and 
$V(I)$ denotes the zero set of the ideal $I$. 
Recall that $F_k$ is defined as 0 if $k\leq \max\{0,n-m\}$, 1 if $k>n$. Otherwise $F_k$ is
the set of minors of order $(n-k+1)\times (n-k+1)$ of a presentation matrix $A_{\phi}$,
which is an $n\times m$ matrix with coefficients in $\bc[H_1(G)]$. Note that 
$\Char_{k+1}(M)\subseteq \Char_k(M)$ and $\Char_{n+1}(M)=\emptyset$. For any $\xi\in \bc[H_1(G)]$, it is common 
to define as $\nul(M,\xi)$ (nullity of $\xi$) or $d_\xi(M)$ (depth of $\xi$) as the maximum $k\in \bz$ such 
that $\xi \in \Char_k(M)$.

We will denote by $\Char_k^{\orb}(\bar{X},\varphi_X)$ and $\nul^{\orb}(\bar{X},\varphi_X)$, the invariants 
of the $\bc[H_1(G)]$-module $M$ described above where $G$ is the orbifold fundamental group 
$G:=\pi_1^{\orb}(\bar{X},\varphi_X)$.

Unless otherwise stated, all groups orbifold homology groups $H_1^{\orb}$ will be considered as $\bc[G_\varphi]$-modules.
Sakuma's formul\ae~\cite[Theorem 7.3]{Sakuma-homology} (see also~\cite[Proposition~2.5.6]{Eriko-alexander})
can be combined and extended in the following result.

\begin{thm}
\label{thm-sakuma}
Under the above conditions, if $\pi:(\bar{Y},\varphi_Y)\to (\bar{X},\varphi_X)$ is a 
\begin{equation}
\label{eq-sakuma}
b_1^{\orb}(\bar{Y},\varphi_Y)=
b_1^{\orb}(\bar{X},\varphi_X) + 
\sum_{\xi \in \Hom(G_{\varphi},\bc^*)\setminus \{1\}} \nul^{\orb}(\bar{X},\xi)
\end{equation}
where $G_\varphi:=G_{\varphi_X}/G_{\varphi_Y} $,
$\nul^{\orb}(\bar{X},\xi)$ is the depth of $\xi$ considered as a character in 
$\pi_1^{\orb}(\bar{X},\varphi_X)$.
\end{thm}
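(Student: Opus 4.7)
\emph{Proof plan.} The plan is to reduce the orbifold statement to the classical Sakuma formula by passing to the smooth quasi-projective model produced in Proposition~\ref{prop-orb-qp}, and then to transport back the characteristic-variety invariants.

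First, I would invoke Proposition~\ref{prop-orb-qp} to realize $G_{\varphi_X}=\pi_1^{\orb}(\bar{X},\varphi_X)$ as the ordinary fundamental group of a smooth quasi-projective surface $Z$. The normal finite-index subgroup $G_{\varphi_Y}\subset G_{\varphi_X}$ corresponds, through classical covering theory, to a finite unbranched topological cover $\tilde Z\to Z$ with deck group $G_\varphi=G_{\varphi_X}/G_{\varphi_Y}$ and $\pi_1(\tilde Z)\cong G_{\varphi_Y}\cong\pi_1^{\orb}(\bar{Y},\varphi_Y)$; the last isomorphism is guaranteed by the essential uniqueness statement of Proposition~\ref{prop-covers}. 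Applying the classical Sakuma formula (cf.~\cite{Sakuma-homology,Eriko-alexander}) to the abelian cover $\tilde Z\to Z$ then gives
$$
b_1(\tilde Z)=b_1(Z)+\sum_{\xi\in\Hom(G_\varphi,\bc^*)\setminus\{1\}}\nul(Z,\xi),
$$
where $\nul(Z,\xi)$ is the classical depth of $\xi$ viewed as a character of $\pi_1(Z)$. Since abelianization is an isomorphism invariant, $b_1(\tilde Z)=b_1^{\orb}(\bar{Y},\varphi_Y)$ and $b_1(Z)=b_1^{\orb}(\bar{X},\varphi_X)$.

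To conclude, I would identify $\nul(Z,\xi)$ with $\nul^{\orb}(\bar{X},\xi)$ for every nontrivial $\xi\in\Hom(G_\varphi,\bc^*)$. Both are Fitting-ideal depths of the Alexander module of a group that enters the construction only through its isomorphism type, and $\pi_1(Z)\cong G_{\varphi_X}$ by construction, so $\Char_k(Z)$ and $\Char_k^{\orb}(\bar{X},\varphi_X)$ coincide as subvarieties of $\Hom(H_1(G_{\varphi_X}),\bc^*)$; hence the depths agree character-by-character and direct substitution yields~\eqref{eq-sakuma}.

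The main obstacle, although essentially formal, is to verify that the $\bc[G_\varphi]$-module structure on $H_1^{\orb}(\bar{Y},\varphi_Y;\bc)$ induced by conjugation in $G_{\varphi_X}$ matches the classical deck-transformation action on $H_1(\tilde Z;\bc)$ under the isomorphisms above. This comes down to unwinding the explicit construction in the proof of Proposition~\ref{prop-orb-qp} and observing that in both settings the $G_\varphi$-action is realized by conjugation by lifts inside the ambient group $G_{\varphi_X}\cong\pi_1(Z)$, making the identification of Alexander modules $G_\varphi$-equivariant. Once this compatibility is in hand, the extension of Sakuma's identity to the orbifold setting is immediate.
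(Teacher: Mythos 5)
Your proposal is correct, but it takes a genuinely different route from the paper. The paper re-runs Sakuma's original three-step argument inside the orbifold category: (1) decompose $H_1^{\orb}(\bar{Y},\varphi_Y)\otimes\bc$ into isotypic components under the deck group $G_\varphi$; (2) use Proposition~\ref{prop-covers} to produce, for each character $\xi$, an intermediate orbifold cover $(\bar{X}_\xi,\varphi_\xi)$ attached to $\ker(G_{\varphi_X}\to G_\varphi\rightmap{\xi}\bc^*)$ whose $\xi$-eigenspace agrees with that of $(\bar{Y},\varphi_Y)$; and (3) identify the dimension of that eigenspace with $\nul^{\orb}(\bar{X},\xi)$. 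You instead transport the whole problem to the smooth quasi-projective model $Z$ of Proposition~\ref{prop-orb-qp} and invoke the classical formula there. This is legitimate because every quantity in~\eqref{eq-sakuma} --- $b_1^{\orb}$ as the rank of an abelianization, and $\nul^{\orb}$ as a Fitting-ideal depth --- depends only on the isomorphism type of the pair (group, finite-index normal subgroup), and because the classical formula in Hironaka's form holds for arbitrary finitely presented groups (equivalently, finite CW-complexes), not merely for link complements; your reduction silently relies on this level of generality, so you should cite that version rather than Sakuma's link-theoretic statement. Two further remarks. First, the step you flag as the ``main obstacle'' (matching the $\bc[G_\varphi]$-module structures) is not actually needed for~\eqref{eq-sakuma}: the identity $b_1(\tilde Z)=\rank(G_{\varphi_Y}^{\ab})=b_1^{\orb}(\bar{Y},\varphi_Y)$ already follows from the abstract group isomorphism, with no equivariance required. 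The equivariance only matters for the finer conclusion that each isotypic piece $\bigl[H_1^{\orb}(\bar{Y},\varphi_Y)\bigr]_\xi$ has dimension exactly $\nul^{\orb}(\bar{X},\xi)$ --- which is what the paper's proof actually establishes and is strictly stronger than the Betti number identity. Second, what the paper's approach buys is precisely this eigenspace refinement together with a geometric realization of the intermediate covers as orbifolds via Proposition~\ref{prop-covers}; what your approach buys is brevity and the transparent observation that the orbifold extension is purely group-theoretic once Proposition~\ref{prop-orb-qp} is in hand.
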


\begin{rem}
Note that there is a connection between $b_1^{\orb}$ and $b_1$, namely 
$$
b_1^{\orb}(\bar{X},\varphi_X)=b_1(X^+_{\varphi_X})=b_1(\bar{X}\setminus \varphi_X^{-1}(0)).
$$
\end{rem}

\begin{proof}[Proof of Theorem{\rm~\ref{thm-sakuma}}]
The proof offered in~\cite{Sakuma-homology} also works in this context. We will briefly outline 
the original proof.
\begin{itemize}
 \item[\emph{Step} 1.]
From representation theory one has 
$$H_1^{\orb}(\bar{Y},\varphi_Y)\cong \bigoplus_{\xi\in\Hom(G_\varphi,\bc^*)}
\left[H_1^{\orb}(\bar{Y},\varphi_Y)\right]_{\xi},$$
where 
$$\left[H_1^{\orb}(\bar{Y},\varphi_Y)\right]_{\xi}=
\{x\in H_1^{\orb}(\bar{Y},\varphi_Y) \mid g(x)=\xi(g)\cdot x \ \forall g\in G_\varphi\}.$$
 \item[\emph{Step} 2.]
Using Proposition~\ref{prop-covers} there exists an orbifold $(\bar{X}_\xi,\varphi_\xi)$ such that
$$
\left[H_1^{\orb}(\bar{Y},\varphi_Y)\right]_{\xi}\cong \left[H_1^{\orb}(\bar{X}_\xi,\varphi_\xi)\right]_{\xi}.
$$
The orbifold $(\bar{X}_\xi,\varphi_\xi)$ corresponds to the kernel of $G_{\varphi_X}\to G_\varphi\ \rightmap{\xi}\ \bc^*$,
which is of finite index in $G_{\varphi_X}$.
 \item[\emph{Step} 3.]
For $\xi\neq 1$ one has 
\begin{equation*}
\dim_{\bc} \left[H_1^{\orb}(\bar{X}_\xi,\varphi_\xi)\right]_{\xi}=\nul^{\orb}(\bar{X},\xi).\qedhere
\end{equation*}
\end{itemize}
\end{proof}

\begin{rem}
Note that even if $(\bar{X},\varphi_X)$ is not an $\NC$-orbifold, the notion of
unbranched covers may easily be defined as long as orbifolds with (normal) arbitrary singularities 
are allowed. In that case we may also consider
the orbifold $(\hat{X},\hat{\varphi}_X)$ obtained after a sequence of blow-ups such that
the transform of $\DD$ by this sequence of blow-ups is a normal crossing divisor and 
$\hat{\varphi}_X$ is defined by homological saturation. The pull-back of $\pi$ defines
another orbifold $(\hat{Y},\hat{\varphi}_Y)$. Note that $\hat{Y}=\hat{Y}_\varphi^+$ and it
has only abelian quotient singularities; it is a resolution of $\bar{Y}$ which may have more
complicated singularities. There is a natural surjection 
$\pi_1^{\orb}(\hat{Y},\hat{\varphi}_Y)\twoheadrightarrow\pi_1^{\orb}(\bar{Y},\varphi_Y)$
which is not in general an isomorphism. Nevertheless, generalizing Libgober's arguments in~\cite{li:82},
it can be proved that the first Betti numbers coincide. 
\end{rem}

To illustrate Theorem~\ref{thm-sakuma}, we can compute the genus of the uniformization of 
$\bar X:=\bp^1_{d_1,\dots,d_{n+1}}$ in some cases where for instance the abelianization map 
$\pi:(\bar{X}_{\ab},\varphi_{\ab})\to(\bar{X},\varphi_X)$ 
is a uniformization. According to~\cite[Theorem~1.3.43]{namba-branched}
this is the case whenever $d_i$ divides $\lcm (d_1,\dots,d_{i-1},d_{i+1},\dots,d_{n+1})$.
On the one hand one can directly use the Riemann-Hurwitz formula to obtain
$$
\chi(\bar X_{\ab})=2-2g(\bar X_{\ab})=(1-n)\frac{d_1\cdot\ldots\cdot d_{n+1}}{d}+
\sum_{k=1}^{n+1} \frac{d_1\cdot\ldots\cdot d_{n+1}}{d d_k},
$$ 
where $d:=\lcm (d_1,\dots,d_{n+1})$. This implies
\begin{equation}
\label{eq-genus-dn}
g(\bar X_{\ab})=\frac{d_1\cdot\ldots\cdot d_{n+1}}{2d}
\left[-1+\sum_{k=1}^{n+1} \left(1-\frac{1}{d_k}\right)\right]+1.
\end{equation}

\begin{example}
Consider the case $1\leq d_1\leq\dots\leq d_{n}\leq d_{n+1}=d=\lcm(d_1,\dots,d_{n})$. 
This is a particular case of the result mentioned above and hence the universal abelian covering 
$\pi:\bar X_{\ab}\to \bar X:=\bp^1_{d_1,\dots,d_{n+1}}$ is in fact a uniformization. 
Using Theorem~\ref{thm-sakuma} one can obtain $b_1(\bar X_{\ab})=b_1^{\orb}(\bar X_{\ab})$ 
by counting the characters in the orbifold characteristic variety of $\bar X$. Note that the space of 
characters on $\pi_1^{\orb}(\bar X)$ is a union of $(n+1)$-tuples 
$$
\TT:=\{(\xi_1,\dots,\xi_n,\xi_{n+1}) \mid \xi_j\in\mu_{d_j},
\prod_{k=1}^{n+1} \xi_k=1\}\subset (\bc^*)^{n+1},
$$ 
where $\mu_n\subset\bc^*$ is the subgroup of $n$-th roots of unity.
Since the equation in the definition of $\TT$ can always be solved for $\xi_{n+1}$ one has that 
$\TT\cong \mu_{d_1}\times \dots \times \mu_{d_{n}}$. Denote by $\ell(\xi)$ the length of $\xi\in \TT$, that is, 
the number of non-trivial coordinates of $\xi$. From~\cite[Proposition~3.11]{ACM-prep} one deduces that
$\depth(\xi)=\ell(\xi)-2$. Denote by $\ell'(\xi)$ the length of $\xi$ in the first $n$ coordinates,
that is, its length as an element of $\mu_{d_1}\times \dots \times \mu_{d_{n}}$. Note that $\ell(\xi)=\ell'(\xi)+1$
unless its last coordinate is 1, in which case $\ell(\xi)=\ell'(\xi)$. 
Therefore if we define 
$$b_1'(\bar X_{\ab}):=\sum_{\xi\in \TT} \ell'(\xi).$$
Then $b_1'(\bar X_{\ab})-b_1(\bar X_{\ab})=\frac{D}{d}$, where $D:=d_1\cdot\ldots\cdot d_{n}$ which is the order of the kernel of 
the map 
$\mu_{d_1}\times \dots \times \mu_{d_{n}}\to \mu_d$ given by multiplication.
Hence,
$$
b_1'(\bar X_{\ab})=
\sum_{\emptyset\neq I\subseteq \{1,\dots,n\}} (\# I-1) \prod_{i\in I} (d_i-1) 
$$
and thus,
$$
b_1(\bar X_{\ab})=
\left[ \sum_{\emptyset\neq I\subseteq \{1,\dots,n\}} (\# I-1) \prod_{i\in I} (d_i-1) 
\right]- \frac{D}{d}.
$$
Using~\eqref{eq-genus-dn} this implies
$$\frac{D}{d^2}\left[-1+\sum_k \left(1-\frac{1}{d_k}\right)\right]+2=
\left[ \sum_{\emptyset\neq I\subseteq \{1,\dots,n\}} (\# I-1) \prod_{i\in I} (d_i-1) 
\right]- \frac{D}{d}.
$$
\end{example}

For computational purposes, the depth $\nul^{\orb}(\bar{X},\xi)$ can also be obtained
from $\mathring{X}_\varphi$.

\begin{prop}
\label{prop-restriction}
Under the above conditions,
$$\Char_k^{\orb}(X,\varphi)\setminus \{1\} = \Char_k(\mathring{X}_\varphi)\cap \TT_\varphi \setminus \{1\},$$
where $\TT_\varphi$ is the inclusion of $\TT(X,\varphi)$ into $\TT(\mathring{X}_\varphi)$
given by the surjection $\pi_1(\mathring{X}_\varphi)\twoheadrightarrow \pi_1^{\orb}(X,\varphi)$.
\end{prop}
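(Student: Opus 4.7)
The plan is to compare Fox-calculus presentations of the two Alexander modules. Since $\pi_1^{\orb}(\bar X,\varphi)$ is obtained from $G_o:=\pi_1(\mathring X_\varphi)$ by imposing the relations $\mu_j^{n_j}=1$ for each component $D_j$ with $n_j>0$, I would start from a presentation $\langle x_1,\ldots,x_m\mid r_1,\ldots,r_p\rangle$ of $G_o$ in which each meridian $\mu_j$ is expressed as a word in the $x_k$, and obtain a presentation of $\pi_1^{\orb}(\bar X,\varphi)$ by adjoining the orbifold relations. The corresponding Fox matrix $A'$ is then the Fox matrix $A$ of $G_o$ augmented by one row per orbifold relation, with $k$-th entry
\[
\frac{\partial \mu_j^{n_j}}{\partial x_k} \;=\; \frac{\partial \mu_j}{\partial x_k}\cdot\bigl(1+\mu_j+\mu_j^2+\cdots+\mu_j^{n_j-1}\bigr).
\]
Proving the proposition reduces to showing that at every character $\xi\in\TT_\varphi\setminus\{1\}$ the added rows contribute nothing to the relevant Fitting ideals, so that the characteristic varieties of $A$ and $A'$ agree on $\TT_\varphi\setminus\{1\}$.

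For any $\xi\in\TT_\varphi$ the defining condition $\xi(\mu_j)^{n_j}=1$ forces $\xi(\mu_j)$ to be an $n_j$-th root of unity. When $\xi(\mu_j)\neq 1$, the factor $1+\xi(\mu_j)+\cdots+\xi(\mu_j)^{n_j-1}$ vanishes as a geometric sum of roots of unity, so the whole added row of $A'$ specialises to zero at $\xi$: in this generic subcase the Fitting ideals of $A$ and $A'$ coincide at $\xi$ and the conclusion is automatic. The delicate subcase is when $\xi(\mu_j)=1$ for some $j$, where the added row becomes $n_j\cdot(\partial\mu_j/\partial x_k)|_\xi$ and may a priori be non-zero; this is where the hypothesis $\xi\neq 1$ must be genuinely used.

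To handle the delicate subcase I would switch to the covering-space/isotypic-component viewpoint already exploited in the proof of Theorem~\ref{thm-sakuma}: any $\xi\in\TT_\varphi\setminus\{1\}$ corresponds via its kernel to a finite abelian cover of $\mathring X_\varphi$ that extends canonically (because $\xi\in\TT_\varphi$) to an orbifold cover of $(\bar X,\varphi)$, and the depth of $\xi$ in either characteristic variety equals the dimension of the $\xi$-isotypic summand of the first (orbifold) homology of the corresponding cover. The class $[\mu_j^{n_j}]$ in that abelianization is fixed by the deck action of $\mu_j$, which on the $\xi$-isotypic summand is multiplication by $\xi(\mu_j)$; combining this with the fundamental Fox identity $\mu_j-1=\sum_k(\partial\mu_j/\partial x_k)(x_k-1)$ evaluated at $\xi$, and using $\xi\neq 1$ to locate some generator on which $\xi$ is non-trivial, one concludes that $[\mu_j^{n_j}]$ has trivial $\xi$-isotypic projection, so adjoining the orbifold relations does not change the $\xi$-isotypic piece. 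The main obstacle is precisely this last step: the purely algebraic Fox-rank argument does not immediately dispose of the rows with $\xi(\mu_j)=1$, and the cover-theoretic reformulation is essential; the exclusion of $\{1\}$ from both sides is forced because $b_1^{\orb}(\bar X,\varphi)\neq b_1(\mathring X_\varphi)$ in general.
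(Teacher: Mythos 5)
Your treatment of the generic subcase (every $\xi(\mu_j)$ a \emph{nontrivial} $n_j$-th root of unity) is correct, and it is exactly the mechanism behind the argument the paper points to via \cite{ACT-survey-zariski}: the cyclotomic factor $1+\xi(\mu_j)+\cdots+\xi(\mu_j)^{n_j-1}$ kills the extra Fox rows, so the Fitting ideals of $A$ and $A'$ agree at such characters. You have also correctly located the difficulty. The problem is that your resolution of the delicate subcase is not an argument: the claim that ``$[\mu_j^{n_j}]$ has trivial $\xi$-isotypic projection'' whenever $\xi\neq 1$ is asserted rather than derived. The Fox identity $\xi(\mu_j)-1=\sum_k(\partial\mu_j/\partial x_k)|_\xi\,(\xi(x_k)-1)$ only says that, when $\xi(\mu_j)=1$, the added row is orthogonal to the vector $(\xi(x_k)-1)_k$; it does not make that row a combination of the rows of $A$, and the existence of some \emph{other} generator on which $\xi$ is nontrivial has no bearing on the class of $\mu_j^{n_j}$.

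In fact the step cannot be repaired, because the two sides of the asserted equality genuinely differ at such characters. Take $\bar X=\bp^1$ with four orbifold points of index $2$, so $\pi_1(\mathring X_\varphi)\cong F_3=\langle x_1,x_2,x_3\rangle$ and $\pi_1^{\orb}=\langle x_1,\dots,x_4\mid x_i^2=1,\ x_1x_2x_3x_4=1\rangle$, and let $\xi=(1,1,-1,-1)\in\TT_\varphi\setminus\{1\}$. Then $\dim H^1(F_3,\bc_\xi)=2$, so $\xi$ lies in $\Char_2(\mathring X_\varphi)\cap\TT_\varphi$; but the relations $x_1^2=x_2^2=1$ specialize to the nonzero rows $2e_1$, $2e_2$ and force every twisted cocycle on $\pi_1^{\orb}$ to vanish on $x_1,x_2$, giving $\dim H^1(\pi_1^{\orb},\bc_\xi)=0$ --- consistent with the formula $\depth(\xi)=\ell(\xi)-2$ for $\bp^1_{d_1,\dots,d_{n+1}}$ quoted from \cite{ACM-prep} later in this section, which visibly depends on how many coordinates of $\xi$ are trivial, whereas the depth in the free group $\pi_1(\mathring X_\varphi)$ does not. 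Only the inclusion $\Char_k^{\orb}(X,\varphi)\subseteq\Char_k(\mathring X_\varphi)\cap\TT_\varphi$ is automatic (extra rows can only increase the rank). So your proof is complete precisely on the locus of characters with $\xi(\mu_j)\neq1$ for every $D_j$ with $n_j\geq 2$ --- which is the only situation in which the statement is invoked in the paper's examples --- and any correct write-up must either restrict to that locus or settle for one inclusion at the remaining characters; the covering-space reformulation you propose does not close this gap.
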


\begin{proof}
The proof is analogous to the one shown in~\cite[Proposition~2.26]{ACT-survey-zariski} for $k=1$.
\end{proof}

\begin{example}
Consider the space ${\mathcal M}$ of sextics with the following combinatorics: 
\begin{enumerate}
\smallbreak\item $\CC$ is a union of a smooth conic $\CC_2$
and a quartic~$\CC_4$. 
\smallbreak\item $\Sing(\CC_4)=\{P,Q\}$ where $Q$ is a cusp of
type $\mathbb A_4$ and $P$ is a node of type~$\mathbb A_1$.
\smallbreak\item $\CC_2 \cap \CC_4=\{Q,R\}$ where $Q$ is a 
$\mathbb D_7$ on $\CC$ and $R$ is a $\mathbb A_{11}$ on~$\CC$.
\end{enumerate}
The space $\mathcal M = \mathcal M^{(1)}\cup \mathcal M^{(2)}$ is a union of two connected components.
Any such sextics $\CC_6^{(i)}= \CC_2^{(i)} \cup \CC_4^{(i)}$ in $\mathcal M^{(i)}$ can be characterized 
by the fact that the conic $q$ passing through $R$ and $Q$ such that 
$\mult_R(q,\CC_2^{(i)})=\mult_R(q,\CC_4^{(i)})=3$, and
$\mult_Q(q,\CC_2^{(i)})=1$ satisfies $\mult_Q(q,\CC_4^{(i)})=3+i$.
The following example is presented in~\cite{ACC-essential}, we refer to it for details.
Consider the orbifolds $(\bp^2,\varphi_i)$, where $\varphi_i(\CC_4^{(i)})=0$ and $\varphi_i(\CC_2^{(i)})=2$.
Using Proposition~\ref{prop-restriction} and ~\cite[Proposition~3.1]{ACT-survey-zariski} it can be checked 
that 
$$
\Char_1^{\orb}(\bp^2,\varphi_i)\setminus \{1\}=
\begin{cases} \emptyset & \text{ if } i=1\\ \{(1,-1)\} & \text{ if } i=2, \end{cases},\quad
\Char_2^{\orb}(\bp^2,\varphi_i)\setminus \{1\}=\emptyset
$$
and hence, using Sakuma's formula~\ref{thm-sakuma} one has 
$$
b_1^{\orb}(Y_i,\varphi_{Y_i})=
\begin{cases}
0 & \text{ if } i=1\\ 1 & \text{ if } i=2,
\end{cases}
$$
where $(Y_i,\varphi_{Y_i})$ denotes the unramified covering of $(\bp^2,\varphi_i)$, 
since $b_1^{\orb}(\bp^2,\varphi_i)=0$. This provides an alternative way to show that $\CC_6^{(1)}$ and $\CC_6^{(2)}$ 
form a Zariski pair, that is, two curves with the same combinatorics but different embedding in~$\bp^2$. In other
words, we prove that $(\bp^2,\CC_6^{(1)})$ and $(\bp^2,\CC_6^{(2)})$ are not homeomorphic by showing that 
$\pi_1^{\orb}(\bp^2,\varphi_1)$ and $\pi_1^{\orb}(\bp^2,\varphi_2)$ are not isomorphic. Note that any homeomorphism
of $\bp^2$ sending $\CC_6^{(1)}$ to $\CC_6^{(2)}$ should send a meridian around $\CC_2^{(1)}$ to a meridian 
around $\CC_2^{(2)}$ and hence $\pi_1^{\orb}(\bp^2,\varphi_1)$ and $\pi_1^{\orb}(\bp^2,\varphi_2)$ should be isomorphic.
\end{example}

We can readily recover, using Theorem~\ref{thm-sakuma} and Proposition~\ref{prop-restriction}, known
computations of the first Betti number of Hirzebruch congruence covers
associated to line arrangements in $\bp^2$, see~\cite{hir:83,Suciu-enumerative}.

For example, consider the orbifold $X=(\bp^2,\varphi)$ associated to
the $6$ lines Ceva arrangement, where $\varphi$ takes value $n$ for all lines.
Then let $Y$ be the orbifold cover associated to the abelianization
$\pi_1^{\orb}(X)\to(\bz/n\bz)^5$. A straightforward counting argument shows that
$b_1^{\orb}(Y)=5(n-1)(n-2)$.

\bibliographystyle{amsplain}
% \bibliography{%biblio_ea%,
% hefei%
% }
\providecommand{\bysame}{\leavevmode\hbox to3em{\hrulefill}\thinspace}
\providecommand{\MR}{\relax\ifhmode\unskip\space\fi MR }
% \MRhref is called by the amsart/book/proc definition of \MR.
\providecommand{\MRhref}[2]{%
  \href{http://www.ams.org/mathscinet-getitem?mr=#1}{#2}
}
\providecommand{\href}[2]{#2}

\end{document}